\newcommand{\cmt}[1]{\Comment{\textcolor{gray!90!black}{#1}}}
\pgfplotsset{compat=1.15}
\newcommand{\defeq}{\vcentcolon=}
\definecolor{shadegray}{gray}{0.96} 	
 \newtheorem {definition} {Definition} [subsection] 
  \newtheorem {theorem}[definition] {Theorem} 
  \newtheorem {corollary}[definition]  {Corollary}
\newtheorem {prop}[definition] {Proposition}
\newtheorem {lemma}[definition] {Lemma} 
\newtheorem {remark}[definition] {Remark} 
\newtheorem {example}[definition] {Example} 
\numberwithin{equation}{section}
\newcommand{\theoremcolorbox}[2]{\def\theoremframecommand{\fcolorbox{#1}{#2}}}
\newtheorem{hyp}{Hypothesis}
\newcommand{\encad}[1]{%
\fbox{\begin{minipage}[t]{0.92\linewidth}%
#1\end{minipage}}}
\newcommand*{\EnsembleQuotient}[2]%
{\ensuremath{%
    #1/\!\raisebox{-.65ex}{\ensuremath{#2}}}}
\newcommand*{\RZ}{\EnsembleQuotient{\mathbb{R}}{\mathbb{Z}}}
\newcommand*{\fonction}[5]{
\begin{align*}
#1 : \left\{\begin{array}{lcl}  #2 &\to      &#3\\
                                #4    &\mapsto  &#5
\end{array} \right. .      
\end{align*} 
}
\newcommand*{\fonctionv}[5]{
\begin{align*}
#1 : \left\{\begin{array}{lcl}  #2 &\to      &#3\\
                                #4    &\mapsto  &#5
\end{array} \right. ,      
\end{align*} 
}
\newcommand*{\ninf}[1]{
\lVert #1 \rVert_\infty
}
\newcommand*{\bninf}[1]{
\Big\lVert #1 \Big\rVert_\infty
}
\newcommand*{\dsph}[2]{\partial_s\varphi(#1,#2)}
\newcommand*{\dsphn}[3]{\partial_s\varphi^{(#3)}(#1,#2)}
\newcommand*{\dxph}[3]{\partial_x^{#3}\varphi(#1,#2)}
\newcommand*{\dxphn}[4]{\partial_x^{#4}\varphi^{(#3)}(#1,#2)}
\newcommand*{\dxsph}[4]{\partial_x^{#3}\partial_s^{#4}\varphi(#1,#2)}
\newcommand*{\ph}[2]{\varphi(#1,#2)}
\newcommand*{\phn}[3]{\varphi^{(#3)}(#1,#2)}
\newcommand*{\rz}{\mathbb{R}/\mathbb{Z}}
\newcommand{\N}{\mathbb{N}}
\newcommand{\R}{\mathbb{R}}
\newcommand{\uo}[3]{\underset{#1}{\overset{#2}{#3}}}
\title{Numerical bounds on the regularity of an invariant function: Probability of extinction of Galton-Watson processes in dynamical environments}
\author{Thomas Morand\thanks{Université Paris-Saclay, CNRS, Laboratoire de mathématiques d’Orsay, 91405, Orsay, France\\ Email: thomas.morand@universite-paris-saclay.fr}\\\textit{Université Paris-Saclay}}
\date{2025}
\begin{document}  

\renewcommand{\proofname}{Proof}
\renewcommand\refname{References}
\renewcommand*\contentsname{Table of contents}
\newcommand{\argmin}{\text{argmin}}
\maketitle

\begin{abstract}
We study the Lyapunov exponents of models that are close to skew product systems over a $\mathcal{C}^1$ uniformly expanding transformation of the circle. For a continuous fibre map $\varphi$, analytic, increasing, and convex in the fibre variable, we consider the smallest invariant function $q$ satisfying $q(x) = \varphi(x, q(Tx))$. We provide rigorous numerical bounds on two Lyapunov exponents (the fibre exponent and the base exponent), and present algorithms to compute these bounds effectively.

We then apply this framework to Galton-Watson processes in dynamical environments in the uniformly supercritical case. The probability of extinction $q$ of the process is the invariant function of the associated system. Using the previously computed Lyapunov exponents, we control the Hölder regularity and differentiability class of the probability of extinction.
\end{abstract}

\section{Introduction}

Understanding the regularity of invariant graphs is a central issue in the study of skew product systems \cite{MR1605989,MR1677161,MR1898800,MR3816739}. From a numerical point of view, their regularity can often be estimated by controlling Lyapunov exponents.
The numerical estimation of Lyapunov exponents has been investigated in several contexts, 
notably for random matrix products \cite{MR4014663,MR4304495}, 
for iterated function systems contracting on average \cite{MR1751564}, 
and for interval transformations \cite{MR2430654,MR4538289} 
(including maps with critical points, discontinuities, or unbounded derivatives).

In this article, we focus on a model close to skew products, arising from the study of 
Galton--Watson processes in dynamical environments in the uniformly supercritical case introduced in 
\cite{morand2024galtonwatsonprocessesdynamicalenvironments}. 
The invariant function of this model represents the probability of extinction. 
We express the Hölder regularity of this invariant function as a ratio between two Lyapunov exponents. 
The main numerical difficulty in controlling the regularity of the invariant graph lies in the fact that the Lyapunov exponent in the fibre direction depends on the invariant function itself.

\subsection*{Numerical bounds on the Lyapunov exponent in the fibre}

We consider a $\mathcal{C}^1$ uniformly expanding transformation of the circle $T:\RZ\to\RZ$, and a continuous application $\varphi:\RZ\times[0,1]\to[0,1]$. Moreover, we assume that for all $x\in\RZ$, the function $s\mapsto\varphi(x,s)$ is analytic on $[0,1)$, increasing, convex, and that $\varphi(x,1)=1$. We denote by $\partial_s\varphi$ the derivative of $\varphi$ with respect to the second variable and by $\partial_x\varphi$ the derivative of $\varphi$ with respect to the first variable if it is well defined. For all $n\in\N$, we define $\varphi^{(n)}:\RZ\times[0,1]\to[0,1]$ by induction: for all $x\in\RZ$ and $s\in[0,1]$, \begin{align*}
\varphi^{(n)}(x,s)\defeq
\left\{\begin{array}{ll}
    & s \text{ if } n=0,\\
    & \varphi(x,\varphi^{(n-1)}(Tx,s)) \text{ otherwise.}
    \end{array}
    \right.
    \end{align*}
We consider the solutions $f:\RZ\to[0,1]$ of the functional equation: \begin{align}\label{invgraph}
     f(x)=\varphi(x,f(Tx)).
\end{align}
The constant function equal to $1$ is a solution of \eqref{invgraph}. We denote by $q$ the smallest solution of this functional equation. This model is closely related to models of invariant graphs of skew product systems, as presented in references \cite{MR1605989,MR1677161,MR1898800,MR3170606,MR3816739}. See \cite[Subsection 1.4]{morand2024galtonwatsonprocessesdynamicalenvironments} for details of the connections. \begin{remark}
     If we consider an invertible transformation of the circle instead of an uniformly expanding transformation, we would recover exactly a skew product system by considering the dynamics in the past. Let $\Psi$ be defined by: \fonction{\Psi}{\RZ\times [0,1]}{\RZ\times [0,1]}{(x,s)}{(T^{-1}x,\varphi(T^{-1}x,s))}
Then, for all $x\in\RZ$, $n\in\mathbb{N}$, and $s\in[0,1]$, 
\begin{align*}
    \Psi^n(x,s)=(T^{-n}x,\varphi^{(n)}(T^{-n}x,s)).
\end{align*}
\end{remark}
We refer to $q$ as the invariant function of the system $(\varphi,T)$ by analogy with models of invariant graphs of skew product systems. 

In this article, we study two Lyapunov exponents: \begin{itemize}
    \item the Lyapunov exponent of the invariant function $q$ in the fibre direction:
  \begin{align}\label{lf}
      \lambda_F&\defeq\lim_{n\to\infty}\frac{1}{n}\sup_{x\in\rz}\log(\partial_s\varphi^{(n)}(x,q(T^n x))),
      \end{align}    
    \item the Lyapunov exponent of the transformation $T$:
  \begin{align}\label{lu}
    \lambda_u\defeq\lim_{n\to\infty}\frac{1}{n}\sup_{x\in\rz}\log(|(T^n)'(x)|). 
  \end{align}\end{itemize}
  
  We obtain rigorous numerical bounds on these Lyapunov exponents. The difficulty lies in the fact that the Lyapunov exponent in the fibre depends on the invariant function, which is not explicit and may have low and unknown regularity. According to the specification property \cite[Corollary~5]{MR718829}, to obtain a lower bound on the Lyapunov exponent in the fibre, it is sufficient to have a lower bound on the invariant function only along the periodic orbits Proposition~\ref{prop2.1}. We then estimate the periodic orbits of $T$ of length at most some $M\in\N$. Corollary~\ref{coro2.2} allows us to obtain an upper bound on the Lyapunov exponent in the fibre. We first need to obtain an upper bound on the invariant function using arithmetic intervals and Lemma~\ref{lem2.1}.
  
 It is easier to control the Lyapunov exponents of the transformation and previous work has been done in more complicated cases \cite{MR2430654,MR4538289}.
  
  All the code has been written in the Julia programming language and is available on GitHub\footnote{\href{https://github.com/thomas-morand/holder_regularity_of_invariant_graph}{Link to code: https://github.com/thomas-morand/holder\_regularity\_of\_invariant\_graph}}.
  
 \subsection*{Application to the probability of extinction of Galton-Watson processes in dynamical environments in the uniformly supercritical case}

We apply these results to study the model of Galton-Watson processes in dynamical environments defined in \cite{morand2024galtonwatsonprocessesdynamicalenvironments}. These are Galton-Watson processes (stochastic branching processes in which each individual in a generation produces a random number of offspring) in which the law of reproduction varies between generations according to a dynamical system. We consider a $\mathcal{C}^1$ uniformly expanding transformation of the circle $T:\RZ\to\RZ$ together with a continuous function $x\in\RZ\mapsto \mu_x\in\mathcal{P}(\mathbb{N})$, called the law of reproduction of the Galton-Watson process in dynamical environments (a function that associates a probability on $\N$ to each $x\in\RZ$). The law of reproduction of the Galton–Watson process in dynamical environments with initial environment $x\in\RZ$ at generation $n\in\mathbb{N}$ is $\mu_{T^nx}$, i.e., the law of reproduction between generations evolves under the action of $T$. For a fixed $x\in\RZ$, the process is a Galton-Watson process in varying environments, as studied in particular in \cite{MR0368197,MR0365733,MR2384553,MR4094390}. We denote by $(Z_n(x))_{n\in\N}$ the Galton–Watson process in dynamical environment with initial environment $x\in\RZ$ and where for all $n\in\mathbb{N}$, $Z_n(x)$ represents the size of the population at the $n$th generation.
To study this model, we use the probability generating function of the law of reproduction $\mu$, defined on $\RZ\times [0,1]$ by:
\begin{align*}
    \varphi(x,s) \defeq \sum_{k=0}^{+\infty} \mu_x(k)s^k.
\end{align*}
Moreover, for all $n\in\N$ and $x\in\RZ$, we consider the probability generating function of the random variable $Z_n(x)$, defined for all $s\in[0,1]$ by:
\begin{align*}
    \varphi^{(n)}(x,s) \defeq  \mathbb{E}[s^{Z_n(x)}].
\end{align*}
The probability of extinction $q:\RZ\to[0,1]$ is a function that associates to each $x\in\RZ$ the probability of extinction of the Galton-Watson process in dynamical environments with the initial environment $x$, i.e., with law of reproduction $(\mu_{T^nx})_{n\in\mathbb{N}}$. When $q(x)=1$, there is almost certain extinction of the process, and we say that $x\in\RZ$ is a bad environment.

We are therefore in the context of the model described in the first part of the introduction, and the probability of extinction $q$ is the smallest solution of the functional equation~\eqref{invgraph}, it is thus the invariant function of our model. If, for all ergodic probability measures, the survival probability is positive for $\nu$-almost all $x\in\RZ$, then we are in the uniformly supercritical case. In this setting, Theorem~\ref{thm2.2} shows that the H\"older regularity and the differentiability class of $q$ are controlled by the ratio $\frac{|\lambda_F|}{\lambda_u}$.

\begin{theorem}\label{thm2.2}
     Let $k\in\N$ and $\alpha\in(0,1]$. Assume:
     \begin{enumerate}[label=\alph*)]
     \item $k+\alpha <\frac{|\lambda_F|}{\lambda_u}$.
    \item $T$ is a $\mathcal{C}^{k+1}$ uniformly expanding transformation of $\RZ$.
    \item For all $\nu\in\mathcal{E}_T(\RZ)$, $\nu(q^{-1}(\{1\}))=0$ (uniformly supercritical case).
    \item $\varphi$ is $\mathcal{C}^{k+1}$ on $\RZ\times[0,1)$.
    \item $q(x)>0$ for all $x\in\RZ$.
\end{enumerate} Then $q$ is $\mathcal{C}^k$, and $q^{(k)}$ is $\alpha$-Hölder continuous.
\end{theorem}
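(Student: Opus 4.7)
The plan is induction on $k$, using the iterated equation $q(x)=\phn{x}{q(T^nx)}{n}$ to play the fibre contraction rate $\lambda_F<0$ against the base expansion rate $\lambda_u>0$. A key preliminary is the uniform trap $0<m\le q\le M<1$ on $\rz$: hypothesis~(e) gives $q\ge m$, while hypothesis~(c), combined with the forward $T$-invariance of $\{q=1\}$ (if $q(x)=1$, then the strict convexity of $\varphi(x,\cdot)$ and $\varphi(x,1)=1$ force $q(Tx)=1$) and a Krylov--Bogolyubov-type argument, forces $\sup q<1$. Once this trap is in place, every partial $\partial_x^a\partial_s^b\varphi$ with $a+b\le k+1$ evaluated along $(x,q(Tx))$ is uniformly bounded on $\rz$ by hypotheses (b) and (d).

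For the base case $k=0$: given $x,y\in\rz$ close, let $n$ be maximal with $d(T^jx,T^jy)\le\delta_0$ for all $j\le n$, so that $d(x,y)\asymp e^{-n\lambda_u}$ up to subexponential corrections. Telescoping $q(x)-q(y)=\phn{x}{q(T^nx)}{n}-\phn{y}{q(T^ny)}{n}$ via the mean value theorem at each factor of $\varphi$ yields a fibre boundary contribution of size $\dsphn{x}{q(T^nx)}{n}\cdot|q(T^nx)-q(T^ny)|\lesssim e^{n(\lambda_F+\varepsilon)}$, plus $n$ base contributions of the form $\bigl(\prod_{i<j}\dsph{T^ix}{\cdot}\bigr)\partial_x\varphi\cdot d(T^jx,T^jy)$, whose geometric sum is dominated by the fibre term. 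Since $e^{n(\lambda_F+\varepsilon)}\asymp d(x,y)^{|\lambda_F|/\lambda_u-\varepsilon'}$, $q$ is $\alpha$-Hölder for every $\alpha<|\lambda_F|/\lambda_u$.

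For the inductive step, assume $q\in\mathcal{C}^j$ with $q^{(j)}$ Hölder, for some $j<k$. Differentiating \eqref{invgraph} $(j+1)$ times via Faà di Bruno gives
\[
q^{(j+1)}(x)=R_j(x)+\dsph{x}{q(Tx)}\,(T'(x))^{j+1}\,q^{(j+1)}(Tx),
\]
where $R_j$ is polynomial in $T^{(\le j+1)}$, in partials of $\varphi$ of order $\le j+1$, and in $q^{(\le j)}$, hence Hölder by induction. The multiplier has $n$-th iterate bounded in log-scale by $n(\lambda_F+(j+1)\lambda_u+\varepsilon)<0$ since $j+1\le k<|\lambda_F|/\lambda_u$, so the Neumann series
\[
\tilde q_{j+1}(x):=\sum_{n\ge 0}\Bigl(\prod_{i=0}^{n-1}\dsph{T^ix}{q(T^{i+1}x)}\,(T'(T^ix))^{j+1}\Bigr)R_j(T^nx)
\]
converges absolutely and uniformly; comparing difference quotients of $q^{(j)}$ with truncations identifies $\tilde q_{j+1}=q^{(j+1)}$, using that $q\le M<1$ to invoke the $\mathcal{C}^{k+1}$ regularity of $\varphi$ on $\rz\times[0,1)$. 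Rerunning the base-case Hölder bookkeeping on the series for $q^{(k)}$, the multiplier contributes log-exponent $\lambda_F+k\lambda_u$, and balancing against $d(x,y)^\alpha\asymp e^{-n\alpha\lambda_u}$ requires precisely $(k+\alpha)\lambda_u<|\lambda_F|$, which is hypothesis~(a). The hardest part is the preliminary uniform trap $q\le M<1$: without it the partials of $\varphi$ blow up at $s=1$ and the recursive bookkeeping collapses; with it, the rest is a careful adaptation of the classical invariant-graph regularity scheme for skew products.
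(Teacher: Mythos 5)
Your strategy is the classical ``linearize the invariance equation and sum the Neumann series'' route, whereas the paper works with the smooth approximants $x\mapsto\partial_x^k\varphi^{(n)}(x,0)$ and passes to the limit by dominated convergence in $\mathcal{C}(\RZ,\mathbb{R})$. The two are not equivalent in rigor, and the gap in your version is precisely the step you compress into ``comparing difference quotients of $q^{(j)}$ with truncations identifies $\tilde q_{j+1}=q^{(j+1)}$.''

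Here is why that step is not routine. To even write the functional relation
\begin{align*}
q^{(j+1)}(x)=R_j(x)+\dsph{x}{q(Tx)}\,(T'(x))^{j+1}\,q^{(j+1)}(Tx),
\end{align*}
you differentiate $q(x)=\varphi(x,q(Tx))$ once more than the inductive hypothesis allows: the hypothesis gives $q\in\mathcal{C}^j$, not $\mathcal{C}^{j+1}$, so the left-hand side is not known to exist. Consequently the iterated form
\begin{align*}
q^{(j+1)}(x)=\sum_{n=0}^{N-1}\Bigl(\prod_{i=0}^{n-1}\dsph{T^ix}{q(T^{i+1}x)}\,(T'(T^ix))^{j+1}\Bigr)R_j(T^nx)+\Bigl(\cdots\Bigr)q^{(j+1)}(T^Nx)
\end{align*}
cannot be used either, and there is no finite truncation of the Neumann series that you can legitimately identify with an exact derivative of anything you already control. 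What your argument actually produces is a candidate $\tilde q_{j+1}$ that satisfies the \emph{linearized} invariance equation uniquely; showing that this candidate equals a derivative that a priori may not exist is exactly the content that the fiber-contraction / $\mathcal{C}^r$-section machinery of Hirsch--Pugh--Shub was built to handle, and it is not a one-line ``compare difference quotients.'' The paper sidesteps this entirely: $\varphi^{(n)}(\cdot,0)$ is a finite composition of $\mathcal{C}^{k+1}$ maps, so $\partial_x^{k}\varphi^{(n)}(\cdot,0)$ exists without circularity, Lemma~\ref{lem2.3} expands it into the finite sum $\sum_{\ell<n}(T^\ell)'(x)^k\,\partial_s\varphi^{(\ell)}(x,\cdot)\,H_{k,n-\ell}(T^\ell x)$, and Proposition~\ref{prop_dq} passes to the limit by dominated convergence; interchanging limit and derivative is then a theorem about uniformly convergent sequences of $\mathcal{C}^1$ maps, not an assumption. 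Once $q^{(k)}$ is realized as a convergent series, Lemma~\ref{lem2.4} controls the $\alpha$-Hölder seminorm of each summand and the Hölder estimate falls out by summing.

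Two smaller remarks. First, your preliminary trap $\sup q<1$ is indeed needed, but the Krylov--Bogolyubov sketch quietly assumes $\{q=1\}$ is closed, which requires the continuity of $q$ that is precisely the $k=0$ base case; the paper defers to \cite[Lemma~4.1.1 and Theorem~1.3.8]{morand2024galtonwatsonprocessesdynamicalenvironments} for exactly this reason. Second, your Faà di Bruno bookkeeping (the form of $R_j$ and the multiplier $\dsph{x}{q(Tx)}(T')^{j+1}$) and the budget $(k+\alpha)\lambda_u<|\lambda_F|$ do match the paper's Lemma~\ref{lem2.3} and Lemma~\ref{lem2.4} in content, so the quantitative skeleton is right; it is the justification of differentiability, not the exponent accounting, that needs to be rebuilt along the paper's approximant route or via a genuine fiber-contraction argument.
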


In Theorem~\ref{thm2.2}, the case $k=0$ is given by \cite[Theorem 1.3.8]{morand2024galtonwatsonprocessesdynamicalenvironments}. We then proceed by induction on $k\in\mathbb{N}$. We know that the family of functions $\big(x\in\mathbb{\RZ}\mapsto\varphi^{(n)}(x,0)\big)_{n\in\mathbb{N}}$ converges uniformly to $q$ and is $(k+1)$-times differentiable. We then show that the sequence $\big(x\in\RZ\mapsto\partial_x^k\varphi^{(n)}(x,0)\big)_{n\in\mathbb{N}}$ converges uniformly to a function (whose Hölder regularity is controlled) which is therefore the $k$th derivative of $q$. To do this, we express \begin{align*}
    x\in\mathbb{\RZ}\mapsto\partial_x^k\varphi^{(n)}(x,0)
\end{align*} as a series of differentiable functions for which we control the uniform norm and the Hölder semi-norms as functions of the Lyapunov exponents.

  The Hölder regularity of invariant graphs in skew product systems has been studied in \cite{MR1605989,MR1677161} for invertible transformations. In \cite{MR1898800}, the authors investigate the $\mathcal{C}^k$ regularity for a hyperbolic dynamical system. In \cite{MR3816739}, the authors consider the case where the Lyapunov exponent in the fibre direction is zero on a set of periodic orbits.

\subsection*{Outline}

In Section~\ref{section1}, we present the model of Galton-Watson processes in dynamical environments. We also describe the main objects studied in this model (and give some of their properties): the probability generating function $\varphi$ and the probability of extinction $q$.

In Section~\ref{sect_effective}, we present techniques and algorithms for computing rigorous bounds on the invariant function $q$ and on the Lyapunov exponents $\lambda_F$ and $\lambda_u$. Subsection~\ref{s2.1} presents the lower bound on the Lyapunov exponents (Algorithm~\ref{alg_lb}), whereas Subsection~\ref{s2.2} presents the upper bound (Algorithm~\ref{alg_ub}). 

In Section~\ref{section3}, we provide an example of Galton-Watson process in dynamical environments to which we can apply the previous results. We analyze the numerical results and discuss the limitations of these algorithms as well as possible alternative methods.

Finally, Appendix~\ref{A} contains the proof of Theorem~\ref{thm2.2}.

\section{Model and previous results}\label{section1}

This section introduces the model of Galton-Watson processes in dynamical environments, the objects studied in this model (the probability generating function $\varphi$ and the probability of extinction $q$), and summarizes previous results in this setting.

\subsection{Galton-Watson processes in dynamical environments}
In this article, we consider $(\RZ,\mathcal{B}(\RZ),T)$ to be a topological discrete-time dynamical system, where $T$ is a $\mathcal{C}^1$ uniformly expanding transformation (there exists $\kappa>1$ such that for all $x\in\RZ$, $|T'(x)|\geq\kappa$).

We denote by $\mathcal{P}_T(\RZ)$ the set of $T$-invariant probability measures on $\RZ$ endowed with the weak*-topology, by $\mathcal{E}_T(\RZ)\defeq\{\nu\in\mathcal{P}_T(\RZ):\forall A \in\mathcal{B}(\RZ), T^{-1}A=A\implies\nu(A)\in\{0,1\}\}$ the set of $T$-ergodic probability measures on $\RZ$, and for all $k\in\N^*$, by $\text{Per}_k(\RZ)$ the set of $k$-periodic points in $\RZ$ (according to $T$).

We equip $\mathcal{P}(\mathbb{N})$ with the topology generated by the $\ell^1$ norm and its associated Borel algebra. Additionally, let:
\begin{align*}
    \mu:\left\{
    \begin{array}{lcl}
    \RZ&\to&\mathcal{P}(\mathbb{N})\\
    x&\mapsto&\mu_x
    \end{array}
    \right. .
\end{align*}
We assume that:
\begin{itemize}
    \item $x\in\RZ\mapsto \mu_x$ is continuous,
    \item $\mu_x\notin\{\delta_0,\delta_1\}$ for all $x\in\RZ$.
\end{itemize}

For each $x\in\RZ$, let the random process $(Z_n(x))_{n\in\mathbb{N}}$ be defined by:
\begin{align}
\left\{\begin{array}{ll}
   Z_0(x) &= 1 ,\\
    Z_{n+1}(x) &= \underset{k=1}{\overset{Z_n(x)}{\sum}}Y_{n,k}(x)\text{ for all }n\in\mathbb{N},\label{eq3}
    \end{array}\right. 
\end{align}
where $(Y_{n,k}(x))_{ (n,k) \in \mathbb{N}^2}$ is a family of independent random variables such that for all $(n,k) \in \mathbb{N}^2$, $Y_{n,k}(x)$ is distributed according to $\mu_{T^n x} $.
The family of random variables $(Z_n(x))_{n\in\mathbb{N}}$ is the Galton-Watson process in dynamical environments associated with the discrete-time dynamical system $(\RZ,\mathcal{B}(\RZ),T)$, the law of reproduction $\mu$, and the initial environment $x\in\RZ$.
\subsection{Extinction and generating function}

In dynamical environments, the law of reproduction depends on the environment, so the generating functions of the law of reproduction also depend on it.

\begin{definition}\label{def2}
The probability generating function of the law of reproduction $\mu$ is defined on $\RZ\times [0,1]$ as:
\begin{align*}
    \varphi(x,s) \defeq \sum_{k=0}^{+\infty} \mu_x(k)s^k = \mathbb{E}[s^{Y(x)}],
\end{align*}
where $Y(x)$ is distributed according to $\mu_{x} $.

For any $n \in \mathbb{N}$, the probability generating function of the distribution of $Z_n(x)$ is:
\begin{align*}
    \varphi^{(n)}(x,s) \defeq \sum_{k=0}^{+\infty} \mu_x^{(n)}(k)s^k= \mathbb{E}[s^{Z_n(x)}],
\end{align*}
where $x\in\RZ$, $s\in[0,1]$, and $\mu_x^{(n)}$ is the distribution of $Z_n(x)$.
\end{definition}

Proposition~\ref{prop5} relates the probability generating functions of the population at successive times.

\begin{prop}\cite[Proposition~1.2.4]{morand2024galtonwatsonprocessesdynamicalenvironments}\label{prop5}
For any $x \in \RZ$, $n,k\in \mathbb{N}$, and $s \in [0,1]$:
    \begin{align}\label{eq5}
 \varphi^{(n+k)}(x,s) = \varphi^{(k)}(x,\varphi^{(n)}(T^kx,s)).
    \end{align}
\end{prop}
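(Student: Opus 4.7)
The plan is to use the standard branching decomposition of a Galton--Watson process: the population at generation $n+k$ is the disjoint union of the subtrees rooted at the $Z_k(x)$ individuals alive at generation $k$, and each such subtree, followed for $n$ further generations, is an independent GW process in the environment shifted by $T^k$. Passing to generating functions then produces exactly the composition formula \eqref{eq5}.

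Concretely, I would first label the individuals alive at generation $k$ by $1,\ldots,Z_k(x)$ and, for each such individual $j$, set $Z_n^{(j)}$ to be the number of its descendants at generation $n+k$. Reading \eqref{eq3} off along the genealogy, each $Z_n^{(j)}$ is built from the offspring variables $Y_{k+m,\ell}(x)$ for $m=0,\ldots,n-1$ and $\ell$ ranging over descendants of $j$, with $Y_{k+m,\ell}(x)\sim\mu_{T^{k+m}x}=\mu_{T^m(T^k x)}$. Since the whole family $(Y_{n,\ell}(x))_{(n,\ell)\in\mathbb{N}^2}$ is independent, the subfamilies attached to distinct roots $j$ are disjoint; hence the $Z_n^{(j)}$ are mutually independent, independent of $Z_k(x)$, and each has the same law as $Z_n(T^k x)$. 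Writing $Z_{n+k}(x)=\sum_{j=1}^{Z_k(x)}Z_n^{(j)}$ and conditioning on $Z_k(x)$ then gives
\begin{align*}
\varphi^{(n+k)}(x,s)
&=\mathbb{E}\bigl[s^{Z_{n+k}(x)}\bigr]
=\mathbb{E}\Bigl[\mathbb{E}\bigl[s^{Z_n(T^k x)}\bigr]^{Z_k(x)}\Bigr] \\
&=\mathbb{E}\bigl[\varphi^{(n)}(T^k x,s)^{Z_k(x)}\bigr]
=\varphi^{(k)}\bigl(x,\varphi^{(n)}(T^k x,s)\bigr),
\end{align*}
where the last equality uses the definition of $\varphi^{(k)}(x,\cdot)$ as the probability generating function of $Z_k(x)$ applied at the value $\varphi^{(n)}(T^k x,s)\in[0,1]$.

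The main obstacle I expect is the genealogical bookkeeping underlying the independence of the subtrees $Z_n^{(j)}$: one must re-index the offspring variables $Y_{n,\ell}(x)$ so that those used by the subtree rooted at $j$ are disjoint from those used by $j'\neq j$, and check that each resulting subfamily indeed produces a copy of the GW process in the shifted environment $T^k x$. This is standard for GW processes in varying environments but notationally slightly tedious. A lighter alternative would be to establish the case $k=1$ directly from the branching decomposition of $Z_{n+1}(x)$ in terms of $Z_1(x)$ and then iterate by induction on $k$; this confines the combinatorial step to a single generation.
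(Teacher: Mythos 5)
Your proof is correct; the branching decomposition of $Z_{n+k}(x)$ into the subtrees rooted at generation~$k$ is the classical argument for Galton--Watson processes, and it transfers to the varying/dynamical-environment setting exactly as you describe. The genealogical bookkeeping you flag is the only real work: the paper's recursion \eqref{eq3} tracks population sizes, not individuals, so to make sense of the $Z_n^{(j)}$ and their mutual independence one has to re-index the offspring variables along an Ulam--Harris labelling (or equivalently enrich the probability space with a full family of iid generation-level variables for each tree vertex). Once that is in place, your conditioning computation is exactly right.

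One remark on the ``lighter alternative'' you sketch: given the paper's definition $Z_{k+1}(x)=\sum_{j=1}^{Z_k(x)}Y_{k,j}(x)$, the special case that falls out with \emph{no} subtree bookkeeping at all is $n=1$, not $k=1$. Indeed, $Z_k(x)$ is measurable with respect to $\sigma(Y_{m,j}:m<k,\ j\in\N)$ and hence independent of $(Y_{k,j})_{j}$, so a single conditioning on $Z_k(x)$ gives
\begin{align*}
\varphi^{(k+1)}(x,s)=\mathbb{E}\bigl[s^{Z_{k+1}(x)}\bigr]
=\mathbb{E}\Bigl[\mathbb{E}\bigl[s^{Y_{k,1}(x)}\bigr]^{Z_k(x)}\Bigr]
=\varphi^{(k)}\bigl(x,\varphi(T^kx,s)\bigr),
\end{align*}
which is \eqref{eq5} with $n=1$. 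Fixing $k$ and inducting on $n$ then yields the general statement: assuming it for $n$, apply the displayed identity at level $n+k$ to get $\varphi^{(n+1+k)}(x,s)=\varphi^{(n+k)}(x,\varphi(T^{n+k}x,s))$, then the induction hypothesis, and finally the displayed identity again at the shifted base point $T^kx$ to recombine $\varphi^{(n)}(T^kx,\varphi(T^{n}(T^kx),s))=\varphi^{(n+1)}(T^kx,s)$. This route avoids any decomposition into subtrees and stays entirely within the size recursion \eqref{eq3}, so it is a bit tighter than either your direct argument or the $k=1$ variant you propose (which still requires decomposing $Z_{n+1}(x)$ into the $Z_1(x)$ first-generation subtrees). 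All three routes are valid; they differ only in where the single conditioning step is placed.
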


The probability of extinction can be defined as a function of the probability generating functions.

\begin{definition}\label{def_q}
We define the probability of extinction $q$ for all $x\in\RZ$, by:\begin{align*}
    q(x)\defeq\underset{n\to\infty}{\lim}\nearrow \varphi^{(n)}(x,0)=\underset{n\to\infty}{\lim}\nearrow\mathbb{P}(Z_n(x)=0).
\end{align*}
\end{definition}
The following proposition is a consequence of Proposition~\ref{prop5} and of the continuity of $\varphi$.

\begin{prop}\cite[Proposition~1.2.6]{morand2024galtonwatsonprocessesdynamicalenvironments}\label{prop1}
For all $x\in\RZ$:
\begin{align}\label{e4}
    q(x)=\varphi(x,q(Tx)).
\end{align}
\end{prop}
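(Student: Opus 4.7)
The plan is to derive the fixed-point identity directly from the composition formula of Proposition~\ref{prop5}, by evaluating at $s = 0$ and passing to the limit as the iteration index tends to infinity.

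First, I would apply Proposition~\ref{prop5} with $k = 1$, $s = 0$, and an arbitrary $n \in \N$, which yields
$$\varphi^{(n+1)}(x, 0) = \varphi\big(x, \varphi^{(n)}(Tx, 0)\big)$$
for every $x \in \RZ$. By Definition~\ref{def_q}, the left-hand side is a subsequence (shifted by one) of a monotone increasing sequence whose limit is $q(x)$, so it converges to $q(x)$ as $n \to \infty$.

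For the right-hand side, the sequence $(\varphi^{(n)}(Tx, 0))_{n \in \N}$ is monotone increasing and bounded above by $1$, hence converges to $q(Tx)$ by Definition~\ref{def_q} applied at the point $Tx$. Since $\varphi$ is assumed continuous on $\RZ \times [0,1]$, the partial function $s \mapsto \varphi(x, s)$ is continuous at $q(Tx)$, so I may pass the limit through $\varphi$ to obtain
$$\lim_{n \to \infty} \varphi\big(x, \varphi^{(n)}(Tx, 0)\big) = \varphi(x, q(Tx)).$$
Equating the two limits yields $q(x) = \varphi(x, q(Tx))$.

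The only step requiring any care is the exchange of limit and $\varphi$ in the fibre coordinate, and this is immediate from the joint continuity of $\varphi$ on $\RZ \times [0,1]$. There is no genuine obstacle here: the proposition is a direct consequence of the iteration identity of Proposition~\ref{prop5} together with the monotone convergence that defines $q$.
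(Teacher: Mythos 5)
Your proof is correct and matches the approach the paper indicates: the paper itself states immediately before the proposition that it ``is a consequence of Proposition~\ref{prop5} and of the continuity of $\varphi$,'' which is exactly the argument you give (apply the composition identity with $k=1$ and $s=0$, then pass to the limit in $n$ using the monotone convergence defining $q$ and the continuity of $s\mapsto\varphi(x,s)$). Nothing further to add.
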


A function such as $q$ or the constant function $1$ that satisfies Equation~\eqref{e4} is called an invariant function with respect to $(\varphi,T)$.

\begin{definition}
  For all $x\in\RZ$, let:
\begin{align*}
    m(x)\defeq \partial_s\varphi(x,1)\in (0,+\infty].
\end{align*}
\end{definition} 
For all $x\in\RZ$, $m(x)$ is the expected value of the probability distribution $\mu_x$.

\subsection{Regularity of the invariant function in the uniformly supercritical case}
The following theorem provides criteria for determining whether there is almost sure extinction of the process in the case where $\nu$ is an ergodic measure. It is a direct corollary of the results of Athreya and Karlin ~\cite[Corollary~1]{MR0298780} and ~\cite[Theorem~3]{MR0298780} in the context of our model.
\begin{theorem}\cite[Theorem 1.3.1]{morand2024galtonwatsonprocessesdynamicalenvironments}\label{thm1}
    Let $\nu\in\mathcal{E}_T(\RZ)$. Then\begin{align*}
        \nu(q^{-1}(\{1\}))=0 \text{ if and only if }\mathbb{E}_\nu[\log m(\cdot)]\defeq\int_{\rz}\log m(x)\,\mathrm{d}\nu(x)>0.
    \end{align*}
\end{theorem}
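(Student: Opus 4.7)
The plan is to reduce the statement to Athreya--Karlin's classical extinction dichotomy for Galton--Watson processes in stationary ergodic random environments, by converting the deterministic dynamical environment into a genuine random environment via the measure $\nu$. Fix $\nu\in\mathcal{E}_T(\RZ)$.

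First, I would sample the initial point $X\sim\nu$. Since $\nu$ is $T$-invariant and ergodic, the orbit sequence $(T^nX)_{n\in\N}$ is stationary and ergodic in $\RZ$, so composing with the continuous map $x\mapsto\mu_x$ produces a stationary ergodic random environment $(\mu_{T^nX})_{n\in\N}$ in $\mathcal{P}(\N)$. Conditionally on $X=x$, the recursion~\eqref{eq3} defining $(Z_n(x))_{n\in\N}$ is by construction that of a Galton--Watson process in varying environment driven by $(\mu_{T^nx})_{n\in\N}$, and its quenched extinction probability equals $q(x)$ by Definition~\ref{def_q}.

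Next, I would invoke the Athreya--Karlin criterion~\cite{MR0298780}. The standing assumption $\mu_x\notin\{\delta_0,\delta_1\}$ for every $x\in\RZ$ supplies the non-degeneracy needed in their statements. Their result asserts that, under a stationary ergodic environment, the quenched survival probability is almost surely strictly positive if and only if the quenched log-mean satisfies $\mathbb{E}[\log m]>0$, while otherwise extinction is almost sure. Translated to our setting this reads
\begin{align*}
\nu\bigl(\{x\in\RZ:q(x)=1\}\bigr)=0\iff\int_{\rz}\log m(x)\,\mathrm{d}\nu(x)>0,
\end{align*}
which is exactly the claimed equivalence.

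The point that requires the most care, and which I expect to be the main (if modest) obstacle, is justifying the clean zero--one alternative that makes this dichotomy cleanly applicable: one must verify that $q^{-1}(\{1\})$ is $T$-invariant. This follows from the functional equation~\eqref{e4} combined with the observation that, since $\mu_x\neq\delta_0$, the generating function satisfies $\varphi(x,s)=1$ only for $s=1$; hence $q(x)=1\Leftrightarrow q(Tx)=1$, so $q^{-1}(\{1\})=T^{-1}q^{-1}(\{1\})$. Ergodicity of $\nu$ then forces $\nu(q^{-1}(\{1\}))\in\{0,1\}$, aligning the "$\nu(q^{-1}(\{1\}))=0$" branch of our statement with the "survives with positive probability" branch of Athreya--Karlin, and no further computation is needed.
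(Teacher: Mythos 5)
Your proposal is correct and follows essentially the same route as the paper: the cited result \cite[Theorem 1.3.1]{morand2024galtonwatsonprocessesdynamicalenvironments} is itself obtained as a direct corollary of Athreya--Karlin's \cite[Corollary~1]{MR0298780} and \cite[Theorem~3]{MR0298780}, applied exactly as you do by sampling the initial environment from $\nu$ to get a stationary ergodic environment, with the $T$-invariance of $q^{-1}(\{1\})$ handled via the functional equation~\eqref{e4}. The only point you gloss over is that Athreya--Karlin's hypotheses (e.g.\ integrability of $-\log(1-\mu_\cdot(0))$ and well-definedness of $\mathbb{E}_\nu[\log m(\cdot)]$) must be checked, but these follow automatically from the continuity of $x\mapsto\mu_x$, the compactness of $\RZ$, and $\mu_x\neq\delta_0$, so this does not affect the validity of the argument.
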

By definition, we say that the process is uniformly supercritical if the set $q^{-1}(\{1\})$ is of measure zero according to all ergodic probability measures. Thus, by Theorem~\ref{thm1} and the compactness of $\mathcal{P}_T(\RZ)$, the process is uniformly supercritical if and only if $\uo{\nu\in\mathcal{P}_T(\rz)}{}{\inf}  \mathbb{E}_\nu[\log m(\cdot)]>0$.

The question arises whether the regularity of $\varphi$ leads to regularity in the invariant function $q$ in the uniformly supercritical case. Under some integrability hypotheses, provided by (H\ref{hyp4}),  continuity is preserved by $q$ in the supercritical case \cite[Theorem 1.3.5]{morand2024galtonwatsonprocessesdynamicalenvironments}.\medskip

\encad{For $k\in\mathbb{N}$, let:
\begin{hyp}[H\ref{hyp4}(k)]\label{hyp4}~
\begin{enumerate}[label=\alph*)]
    \item $T$ is a $\mathcal{C}^{k+1}$ uniformly expanding transformation of $\RZ$.
    \item For all $\nu\in\mathcal{E}_T(\RZ)$, $\mathbb{E}_\nu[\log m(\cdot)]>0$ (uniformly supercritical case).\label{US}
    \item $\varphi$ is $\mathcal{C}^{k+1}$ on $\RZ\times[0,1)$.
    \item $q(x)>0$ for all $x\in\RZ$.
\end{enumerate} 
\end{hyp}}\medskip

Let $(H\ref{hyp4})\defeq (H\ref{hyp4}(0))$. For all $k\in\N$, we have that Hypothesis~(H\ref{hyp4}(k+1)) implies Hypothesis~(H\ref{hyp4}(k)). Let $k\in\N$. If for all $i\in\N$, $x\in\RZ\mapsto\mu_x(i)$ is $\mathcal{C}^{k+1}$, and $\lVert \mu_x(i)\rVert_{\mathcal{C}^{k+1}}=o(a^i)$ for all $a>1$, then Hypothesis~(H\ref{hyp4}(k))c) is satisfied.

The Lyapunov exponent $\lambda_u$ (Equation~\eqref{lu}) is well defined (and finite because $T$ is $\mathcal{C}^1$) by Fekete's subadditive lemma, and $\lambda_F$ (Equation~\eqref{lf}) is well defined by \cite[Lemma 4.2.2]{morand2024galtonwatsonprocessesdynamicalenvironments}. $\lambda_u$ controls the speed at which two orbits move apart. Moreover, $\lambda_u$ is bigger than the logarithm of the topological degree of $T$, so is positive. Conversely, the Lyapunov exponent in the fibre $\lambda_F$ allows us to control the speed at which the probability of extinction at the $n$th generation converges towards the invariant function $q$ \cite[Proposition~4.2.5]{morand2024galtonwatsonprocessesdynamicalenvironments}, and it is negative under (H\ref{hyp4}) by \cite[Lemma 4.2.6]{morand2024galtonwatsonprocessesdynamicalenvironments}. The ratio $\frac{|\lambda_F|}{\lambda_u}$ compares the convergence speed toward the invariant function with the rate at which two orbits separate under $T$. This ratio controls the Hölder seminorm of $\varphi^{(n)}$.

\begin{theorem}\cite[Theorem 1.3.8]{morand2024galtonwatsonprocessesdynamicalenvironments}\label{thm2.6}
   Let $\alpha\in(0,1]$. Assume (H\ref{hyp4}), and that $\alpha<\frac{|\lambda_F|}{\lambda_u}$. Then $q$ is $\alpha$-Hölder continuous.
\end{theorem}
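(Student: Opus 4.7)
The plan is to approximate $q$ by the monotone sequence $q_n \defeq \varphi^{(n)}(\cdot, 0) \nearrow q$ and show that the $q_n$ are equi-$\alpha$-Hölder, so that the Hölder regularity passes to the (uniform) limit. Before starting the main estimate, I observe that hypotheses b), d) and continuity of $q$ (from \cite[Theorem 1.3.5]{morand2024galtonwatsonprocessesdynamicalenvironments}) together imply $0 < \eta \leq q(\cdot) \leq 1-\eta$ for some $\eta > 0$, so that $\partial_s\varphi$ and $\partial_x\varphi$ are continuous and uniformly bounded, and $\partial_s\varphi$ is bounded below by a positive constant, on a neighborhood of the graph of $q$.

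The core step is to iterate the functional equation $q_n(x) = \varphi(x, q_{n-1}(Tx))$: splitting each step into a base contribution and a fibre contribution and applying the mean value theorem in $s$ gives, for any $k \leq n$,
\begin{align*}
|q_n(x) - q_n(y)| \leq C \sum_{j=0}^{k-1} \Pi_j(y) \, |T^j x - T^j y| + \Pi_k(y) \cdot |q_{n-k}(T^k x) - q_{n-k}(T^k y)|,
\end{align*}
where $C \defeq \sup |\partial_x \varphi|$ on the relevant neighborhood and $\Pi_j(y) = \prod_{i=0}^{j-1} \partial_s\varphi(T^i y, \xi_i)$, each $\xi_i$ between $q_{n-1-i}(T^{i+1}x)$ and $q_{n-1-i}(T^{i+1}y)$. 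Convexity of $\varphi$ in $s$ (so $\partial_s\varphi$ is increasing in $s$) together with $q_{n-1-i} \leq q$ should yield a multiplicative comparison $\Pi_j(y) \leq C' \, \partial_s\varphi^{(j)}(y, q(T^j y))$ uniformly in $j$: using analyticity of $\varphi(x, \cdot)$ to get Lipschitz control of $\partial_s\varphi$ in $s$ and the lower bound of $\partial_s\varphi$ near the graph, the ratio $\Pi_j / \partial_s\varphi^{(j)}(\cdot, q(T^j \cdot))$ is bounded by $\exp\bigl(C'' \sum_{i=0}^{j-1} |q(T^{i+1}x) - q(T^{i+1}y)|\bigr)$, which stays bounded provided one works on the range of $j$ where the iterates of $x$ and $y$ are still close.

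I then plug in the Lyapunov exponent estimates. Fix $\epsilon > 0$ small enough that $\alpha < (|\lambda_F| - \epsilon)/(\lambda_u + \epsilon)$. By the definitions of $\lambda_F$ and $\lambda_u$, for $j$ large enough and uniformly in $y$,
\begin{align*}
\partial_s\varphi^{(j)}(y, q(T^j y)) \leq e^{j(\lambda_F + \epsilon)}, \qquad |T^j x - T^j y| \leq e^{j(\lambda_u + \epsilon)} |x - y|.
\end{align*}
Given $x \neq y$ with $|x-y|$ small, choose $k = k(x,y)$ to be the largest integer with $e^{k(\lambda_u + \epsilon)} |x-y| \leq \tfrac{1}{2}$, so that $k \simeq \log(1/|x-y|)/(\lambda_u + \epsilon)$. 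The boundary term is then bounded by $e^{k(\lambda_F + \epsilon)} \leq |x-y|^{(|\lambda_F|-\epsilon)/(\lambda_u + \epsilon)} \leq |x-y|^\alpha$, while each summand is $\lesssim e^{j(\lambda_F + \lambda_u + 2\epsilon)}|x-y|$, either producing a bounded geometric sum (Lipschitz, stronger than $\alpha$-Hölder) or dominated by its last term, which by the same calculation is $\lesssim |x-y|^\alpha$. Letting $n \to \infty$ concludes, because the Hölder constant does not depend on $n$.

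The main obstacle is precisely the multiplicative comparison $\Pi_j(y) \lesssim \partial_s\varphi^{(j)}(y, q(T^j y))$: the Lyapunov exponent $\lambda_F$ is defined along the graph of $q$, whereas $\Pi_j$ is evaluated at intermediate values $\xi_i$ that lie slightly above $q$. Closing the circularity requires combining (i) Lipschitz regularity of $\partial_s\varphi$ in $s$ from analyticity, (ii) a positive lower bound on $\partial_s\varphi$ near the graph (from $q > 0$ and $\mu_x \neq \delta_0$), (iii) continuity of $q$ (known from prior work), and (iv) the quantitative fact that along the short orbit segment $j \leq k(x,y)$ the iterates $T^j x$ and $T^j y$ never separate by more than $\tfrac{1}{2}$, so the accumulated perturbation of the product stays bounded. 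The strict inequality $\alpha < |\lambda_F|/\lambda_u$ provides the $\epsilon$-slack needed to absorb all the $o(n)$-corrections coming from the definitions of the Lyapunov exponents.
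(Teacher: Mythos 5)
The paper does not contain its own proof of Theorem~\ref{thm2.6}: it is cited from \cite[Theorem~1.3.8]{morand2024galtonwatsonprocessesdynamicalenvironments} and used as the base case $k=0$ of the induction in Appendix~\ref{A}, so a direct comparison with ``the paper's proof'' is not possible. That said, your overall architecture (iterate the functional equation $q_n(x)=\varphi(x,q_{n-1}(Tx))$, pick a stopping time $k(x,y)\simeq \log(1/|x-y|)/(\lambda_u+\varepsilon)$, and use the Lyapunov exponent bounds on the resulting products) is the natural one and matches the spirit of the appendix.

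However, there is a genuine gap precisely at the step you flag yourself: the multiplicative comparison $\Pi_j(y)\lesssim \partial_s\varphi^{(j)}(y,q(T^jy))$ uniformly in $j\leq k(x,y)$. Your bound on the ratio reduces to controlling $\sum_{i=0}^{j-1}|q(T^{i+1}x)-q(T^{i+1}y)|$, and you assert that items (iii)--(iv) make this sum bounded. They do not. Continuity of $q$ only gives a (non-quantitative) modulus $\omega$ with $\omega(r)\to 0$, so the sum is of order $\sum_{i<k}\omega\bigl(e^{i(\lambda_u+\varepsilon)}|x-y|\bigr)\approx \frac{1}{\lambda_u+\varepsilon}\int_{|x-y|}^{1/2}\frac{\omega(r)}{r}\,\mathrm{d}r$, which stays bounded as $|x-y|\to 0$ only if $q$ is Dini-continuous --- a quantitative regularity you do not have a priori. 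The fact that the iterates stay within distance $1/2$ caps each summand, but the number of summands $k(x,y)$ tends to $+\infty$, so ``bounded perturbation along the short orbit segment'' does not follow. The argument is thus circular: you need Hölder (or at least Dini) regularity of $q$ to establish the comparison, which is what you are trying to prove. The same gap affects the boundary term, since it too requires $\Pi_k(y)\leq e^{k(\lambda_F+\varepsilon)}$.

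The appendix proof of Theorem~\ref{thm2.2} for $k\geq 1$ avoids the analogous difficulty by differentiating exactly rather than estimating a difference quotient: the chain rule produces the factor $\partial_s\varphi^{(\ell)}\bigl(x,\varphi^{(n-\ell)}(T^\ell x,0)\bigr)$ evaluated at a \emph{single} base point $x$ and at a value $\varphi^{(n-\ell)}(T^\ell x,0)\leq q(T^\ell x)$, so monotonicity of $\partial_s\varphi^{(\ell)}(x,\cdot)$ immediately gives $\leq\partial_s\varphi^{(\ell)}(x,q(T^\ell x))$ with no correction. By contrast your mean-value intermediate points $\xi_i$ straddle the orbits of $x$ and $y$ and can lie strictly above $q(T^{i+1}y)$ whenever $q(T^{i+1}x)>q(T^{i+1}y)$, which is the source of the uncontrolled correction. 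This trick does not transfer directly to $k=0$ because $q$ is a priori not even Lipschitz; closing the argument at the $k=0$ level requires either a genuinely different one-sided decomposition, a bootstrap from some a priori weak modulus of continuity, or invoking the proof in the cited reference.
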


In the setting of \cite{morand2024galtonwatsonprocessesdynamicalenvironments}, $T$ is a transformation on a compact space, and \cite[Theorem 1.3.8]{morand2024galtonwatsonprocessesdynamicalenvironments} provides sufficient conditions for $q$ to be Hölder continuous. In this article, the compact space under consideration is the smooth manifold $\RZ$. Then, we can ask what the differentiability class of $q$ is and what the Hölder regularity of its derivatives is. Theorem~\ref{thm2.2} answers this question by generalizing \cite[Theorem 1.3.8]{morand2024galtonwatsonprocessesdynamicalenvironments}. Its proof is in Appendix~\ref{A}.

\section{Effective estimates of the Lyapunov exponents}\label{sect_effective}

In this section, we present algorithms for computing rigorous bounds on the Lyapunov exponents $\lambda_F$ and $\lambda_u$ (see respectively Equations~\eqref{lf} and \eqref{lu} for their definitions) under (H\ref{hyp4}). Thanks to Theorem~\ref{thm2.2}, this will enable us to obtain rigorous bounds on the differentiability class and the Hölder regularity of $q$. 

\begin{remark}\label{rem1}
    In these algorithms, we also assume for simplicity that $T(0)=0$, and $T'>0$. However, the assumption $T(0)=0$ is not restrictive. Indeed, there exists a point $z\in\rz$ such that $T(z)=z$. Consider $\widetilde{T}=\tau_z\circ T \circ \tau_z^{-1}$ and $\widetilde{\mu}=\mu\circ \tau_z^{-1}$ with $\tau_z:x\in\rz\mapsto x-z\in\rz$, then $\widetilde{T}(0)=0$ and the Lyapunov exponents $\lambda_u$ and $\lambda_F$ of $(\widetilde{T},\widetilde{\mu})$ are the same as those of $(T,\mu)$.
\end{remark}

We introduce an auxiliary function $F$ to express the Lyapunov exponent in the fibre $\lambda_F$.

\begin{definition}\label{def1}
  For all \( x\in\RZ \), let:
\begin{align*}
    F(x)\defeq&\log \partial_s\varphi(x,q(Tx)).
\end{align*}
\end{definition}

\begin{lemma}\label{lmc}
    Under (H\ref{hyp4}), the functions $F$ and $\log |T'|$ are continuous.
\end{lemma}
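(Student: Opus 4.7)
The plan is to handle the two functions separately. For $\log|T'|$, hypothesis (H\ref{hyp4}) makes $T$ at least $\mathcal{C}^1$, so $T'$ is continuous, and uniform expansion gives $|T'|\geq\kappa>1$; hence $\log|T'|$ is continuous as a composition of continuous maps.

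For $F$, the main issue is that $\varphi$ is only assumed $\mathcal{C}^1$ on $\RZ\times[0,1)$, so I first need to show that $q(x)<1$ for every $x\in\RZ$. I would argue as follows. Under (H\ref{hyp4}), the continuity of $q$ is provided by \cite[Theorem~1.3.5]{morand2024galtonwatsonprocessesdynamicalenvironments}, so $A\defeq q^{-1}(\{1\})$ is closed. The functional equation $q(x)=\varphi(x,q(Tx))$, together with the strict inequality $\varphi(x,s)<1$ for $s<1$ (which follows from $\varphi(x,1)=1$, from $\varphi(x,\cdot)$ being increasing, and from $\mu_x\notin\{\delta_0,\delta_1\}$), shows that $q(x)=1$ forces $q(Tx)=1$; thus $T(A)\subset A$. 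If $A$ were non-empty, $T|_A$ would be a continuous self-map of the compact set $A$, and Krylov--Bogolyubov combined with the ergodic decomposition would produce some $\nu\in\mathcal{E}_T(\RZ)$ with $\nu(A)>0$, contradicting (H\ref{hyp4})b) via Theorem~\ref{thm1}. Hence $A=\emptyset$ and $q<1$ on $\RZ$.

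Once $q<1$ is established, the rest is routine. Since $q$ is continuous with values in $(0,1)$ and $T$ is continuous, $x\mapsto(x,q(Tx))$ takes values in $\RZ\times[0,1)$, on which $\partial_s\varphi$ is continuous; therefore $x\mapsto\partial_s\varphi(x,q(Tx))$ is continuous. To take the logarithm I only need strict positivity, which follows from $q(Tx)>0$ by (H\ref{hyp4})d) and from $\mu_x\neq\delta_0$ providing some $j\geq 1$ with $\mu_x(j)>0$, so that the series $\partial_s\varphi(x,s)=\sum_{j\geq 1}j\,\mu_x(j)\,s^{j-1}$ is strictly positive at $s=q(Tx)>0$. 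Continuity of $F$ then follows.

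The main obstacle is precisely the step $q<1$ on all of $\RZ$: it is the point where the measure-theoretic form of the uniformly supercritical hypothesis has to be promoted to a pointwise statement, by combining continuity of $q$, forward invariance of $q^{-1}(\{1\})$, and the Krylov--Bogolyubov construction of an ergodic measure concentrated on $q^{-1}(\{1\})$. The rest is straightforward manipulation of continuous compositions.
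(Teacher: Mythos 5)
Your proposal is correct and follows the same overall decomposition as the paper's proof: $\log|T'|$ is continuous because $T$ is $\mathcal{C}^1$ and uniformly expanding, and $F$ is continuous because $q$ is continuous and positive, $\mu_x\neq\delta_0$ for all $x$, and $\varphi$ is sufficiently regular. The paper's own proof is much terser: it simply invokes these facts (citing \cite[Theorem~1.3.5]{morand2024galtonwatsonprocessesdynamicalenvironments} for the continuity of $q$) and does not explicitly address in this lemma's proof why $q<1$ everywhere, even though this is needed for $\partial_s\varphi(x,q(Tx))$ to be finite, since $\partial_s\varphi(x,1)=m(x)$ may be infinite. That pointwise bound is available in the paper's framework via \cite[Lemma~4.1.1]{morand2024galtonwatsonprocessesdynamicalenvironments}, which is cited later in Subsection~\ref{s2.2} and gives a uniform bound $q\leq K<1$ in the uniformly supercritical case. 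Your self-contained argument --- the set $q^{-1}(\{1\})$ is closed, is forward-invariant because $\varphi(x,\cdot)<1$ on $[0,1)$, and if nonempty would carry an ergodic measure by Krylov--Bogolyubov together with ergodic decomposition, contradicting (H\ref{hyp4})b) via Theorem~\ref{thm1} --- is a valid alternative to that citation and transparently promotes the measure-theoretic hypothesis to a pointwise statement. The remaining details you give (strict positivity of $\partial_s\varphi(x,q(Tx))$ from $q>0$ and $\mu_x\neq\delta_0$, and joint continuity of $\partial_s\varphi$ on $\RZ\times[0,1)$ from (H\ref{hyp4})c)) match what the paper's proof implicitly relies on.
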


\begin{proof}
    The function $x\in\RZ\mapsto F(x)=\log\partial_s\varphi(x,q(Tx))$ is continuous because $q$ is positive and continuous \cite[Theorem 1.3.5]{morand2024galtonwatsonprocessesdynamicalenvironments}, $\mu_x\neq\delta_0$ for all $x\in\RZ$, and $\varphi$ is continuous. Moreover, the function $\log |T'|$ is continuous because $T$ is $\mathcal{C}^1$ and for all $x\in\RZ$, $|T'(x)|>1$.
\end{proof}

Proposition~\ref{proplf} provides alternative definitions of the Lyapunov exponent in the fibre and of the Lyapunov exponent of the transformation $T$.

\begin{prop}\cite[Lemma~2.2.3, Proposition~4.2.3 and Lemma~4.2.4]{morand2024galtonwatsonprocessesdynamicalenvironments}\label{proplf}
    Under (H\ref{hyp4}), for all $0<a<1$,
    \begin{align*}
        \lambda_F&=\underset{n\to\infty}{\lim} \underset{x\in\rz}{\sup}\frac{1}{n}\sum_{k=0}^{n-1}F(T^kx)\\&=\underset{n\to\infty}{\lim} \underset{x\in\rz}{\sup}\frac{1}{n}\log \partial_s \varphi^{(n)}(x,a)\\&=\sup_{\nu\in\mathcal{P}_T(\rz)}\int_{\rz} F(x) \,\mathrm{d}\nu(x).
    \end{align*}
    Moreover,
    \begin{align*}
        \lambda_u&=\underset{n\to\infty}{\lim} \underset{x\in\rz}{\sup}\frac{1}{n} \sum_{k=0}^{n-1}\log|T'(T^kx)|\\&=\sup_{\nu\in\mathcal{P}_T(\rz)}\int_{\rz} \log|T'(x)| \,\mathrm{d}\nu(x).
    \end{align*}
\end{prop}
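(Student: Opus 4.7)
The plan is to prove the three equivalent expressions for $\lambda_F$ first, then derive the corresponding formulas for $\lambda_u$ as a simpler analogue. The starting point is the chain-rule identity
\begin{align*}
\partial_s\varphi^{(n)}(x,s) = \prod_{k=0}^{n-1} \partial_s\varphi\bigl(T^k x,\, \varphi^{(n-k-1)}(T^{k+1}x, s)\bigr),
\end{align*}
obtained by induction from $\varphi^{(n)}(x,s)=\varphi(x,\varphi^{(n-1)}(Tx,s))$. Substituting $s=q(T^n x)$ and applying the iterated functional equation $\varphi^{(n-k-1)}(T^{k+1}x,q(T^nx))=q(T^{k+1}x)$ (Propositions~\ref{prop5} and~\ref{prop1}) collapses the product to $\prod_{k=0}^{n-1} e^{F(T^k x)}$. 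Taking logs, dividing by $n$, and passing to $\sup_{x\in\rz}$ followed by $\lim_{n\to\infty}$ yields the identity $\lambda_F=\lim_n \sup_x \frac{1}{n}\sum_{k=0}^{n-1}F(T^k x)$ directly from the definition \eqref{lf}.

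Next, to replace $q(T^n x)$ by an arbitrary $a\in(0,1)$, I would apply the same chain-rule decomposition to $\log\partial_s\varphi^{(n)}(x,a)$. The uniformly supercritical hypothesis, combined with continuity and positivity of $q$ on the compact space $\RZ$, gives $\varphi^{(m)}(\cdot,a)\to q$ uniformly for any $a\in(0,1)$ (convexity of $\varphi(x,\cdot)$ plus $q<1$ everywhere are the ingredients; this is essentially the content of the cited lemmas). Uniform continuity of $\partial_s\varphi$ on $\RZ\times[0,q_{\max}']$ for some $q_{\max}'<1$ then shows that for any $\varepsilon>0$, all but $O(1)$ of the factors $\partial_s\varphi(T^kx,\varphi^{(n-k-1)}(T^{k+1}x,a))$ lie within $\varepsilon$ of $e^{F(T^k x)}$. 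After dividing by $n$, the bounded exceptional block vanishes, giving the second equality in the expression for $\lambda_F$ independently of $a$.

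For the variational formula, I would use a Krylov--Bogolyubov style argument with empirical measures. The lower bound is immediate: given $\nu\in\mathcal{P}_T(\rz)$, Birkhoff's ergodic theorem on each ergodic component gives $\frac{1}{n}\sum_{k=0}^{n-1}F(T^k x)\to \int F\,d\nu$ on a set of positive $\nu$-measure, hence $\sup_x \frac{1}{n}\sum_{k=0}^{n-1}F(T^k x)\geq \int F\,d\nu$ eventually. For the upper bound, pick $x_n$ realising the sup up to $1/n$, form the empirical measures $\nu_n\defeq\frac{1}{n}\sum_{k=0}^{n-1}\delta_{T^k x_n}$, extract a weak-$\ast$ convergent subsequence using compactness of $\mathcal{P}(\rz)$, and verify the limit lies in $\mathcal{P}_T(\rz)$ by the classical telescoping estimate $|\int g\circ T\,d\nu_n - \int g\,d\nu_n|\leq \frac{2\lVert g\rVert_\infty}{n}$. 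Continuity of $F$ (Lemma~\ref{lmc}) then passes the sum through the weak-$\ast$ limit, yielding the sup.

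The treatment of $\lambda_u$ is identical but strictly simpler, since $\log|T'|$ is continuous on $\rz$ from the outset and requires neither the chain-rule bookkeeping nor the comparison with $q$: the first equality is just the definition of $(T^n)'$ via the chain rule, and the variational formula uses the same empirical-measure argument with $\log|T'|$ in place of $F$. The main obstacle is the $a$-independence step: one must simultaneously control the tail of terms where $\varphi^{(n-k-1)}(T^{k+1}x,a)$ has not yet relaxed close to $q(T^{k+1}x)$, and ensure uniform continuity of $\partial_s\varphi$ on a domain staying away from the boundary $s=1$, which in turn requires using uniform supercriticality to establish $\sup_x q(x)<1$.
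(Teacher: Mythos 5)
Your proposal is correct and reconstructs exactly the arguments that the paper delegates to citations: the chain-rule factorization $\partial_s\varphi^{(n)}(x,q(T^nx))=\prod_{k<n}e^{F(T^kx)}$ (via the invariance $\varphi^{(m)}(y,q(T^my))=q(y)$), the $a$-independence through uniform convergence of $\varphi^{(m)}(\cdot,a)$ to $q$ together with $\sup q<1$ and boundedness of $\partial_s\varphi$ away from $s=1$, and the variational formula via Fekete subadditivity plus a Krylov--Bogolyubov empirical-measure argument, which is precisely the content of the semi-uniform ergodic theorem the paper invokes. The paper's proof is essentially a pointer to \cite[Lemma~2.2.3, Proposition~4.2.3, Lemma~4.2.4]{morand2024galtonwatsonprocessesdynamicalenvironments} and to \cite[Theorem~1.9]{MR1734626}; your sketch supplies the substance behind those references and is compatible with the paper's route.
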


\begin{proof}
    To obtain the first equality for $\lambda_u$, apply the chain rule to differentiate $T^n$. The second equality for $\lambda_u$ follows directly from the semi-uniform ergodic theorem \cite[Theorem 1.9]{MR1734626}. The results for $\lambda_F$ are proven in \cite[Lemma~2.2.3, Proposition~4.2.3 and Lemma~4.2.4]{morand2024galtonwatsonprocessesdynamicalenvironments}.
\end{proof}

\subsection{Lower bound on the Lyapunov exponents}\label{s2.1}

This subsection presents a method for computing lower bounds on the Lyapunov exponents, $\lambda_F$ and $\lambda_u$.

\subsubsection{Lyapunov exponents on periodic orbits}

Proposition~\ref{prop2.1} shows that, to obtain the Lyapunov exponents $\lambda_F$ and $\lambda_u$, it is sufficient to consider the functions $F$ and $\log|T'|$ on periodic orbits. This follows from the density of ergodic measures supported on periodic orbits within the space of all ergodic measures \cite[Corollary~5]{MR718829}.

\begin{prop}\cite[Corollary~5]{MR718829}
\label{prop2.1}
    Assume (H\ref{hyp4}). For all continuous functions $f:\RZ\to \mathbb{R}$, \begin{align*}
        \sup_{\nu\in\mathcal{P}_T(\rz)}\int_{\rz} f(x) \,\mathrm{d}\nu(x)=\sup_{k\in\mathbb{N}^*}\sup_{x\in \text{Per}_k(\rz)}\frac{1}{k}\sum_{i=0}^{k-1}f(T^ix).
    \end{align*}
In particular, \begin{align*}
    \lambda_F=\sup_{k\in\mathbb{N}^*}\sup_{x\in \text{Per}_k(\rz)}\frac{1}{k}\sum_{i=0}^{k-1}F(T^ix) \quad \text{and} \quad \lambda_u=\sup_{k\in\mathbb{N}^*}\sup_{x\in \text{Per}_k(\rz)}\frac{1}{k}\sum_{i=0}^{k-1}\log|T'(T^ix)|.
\end{align*}
\end{prop}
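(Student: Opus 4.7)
The plan is to prove the claimed identity via a two-sided inequality, and then read off the two statements for $\lambda_F$ and $\lambda_u$ from Proposition~\ref{proplf}.

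For the easy direction $\geq$, I would observe that for every $k \in \mathbb{N}^*$ and every $x \in \text{Per}_k(\rz)$, the empirical orbit measure
\begin{align*}
\nu_{x,k} \defeq \frac{1}{k}\sum_{i=0}^{k-1} \delta_{T^i x}
\end{align*}
belongs to $\mathcal{P}_T(\rz)$ and satisfies $\int_{\rz} f\, \mathrm{d}\nu_{x,k} = \frac{1}{k}\sum_{i=0}^{k-1} f(T^i x)$. Taking the supremum over $k$ and over $x \in \text{Per}_k(\rz)$ therefore yields the bound by the supremum over $\mathcal{P}_T(\rz)$.

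For the harder direction $\leq$, the strategy is to exploit the density of periodic orbit measures in $\mathcal{P}_T(\rz)$ for the weak-$*$ topology. This density is precisely the content of [Corollary~5] of MR718829, and it applies here because a $\mathcal{C}^1$ uniformly expanding transformation of the circle is expansive and topologically mixing, hence satisfies Bowen's specification property. Since $f$ is continuous on the compact space $\rz$, the linear functional $\nu \mapsto \int_{\rz} f\, \mathrm{d}\nu$ is weak-$*$ continuous on $\mathcal{P}_T(\rz)$. Hence for any $\nu \in \mathcal{P}_T(\rz)$ realising (or approaching) the left-hand supremum, a weak-$*$ approximating sequence of periodic orbit measures produces ergodic averages converging to $\int_{\rz} f\, \mathrm{d}\nu$, giving the reverse inequality. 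If the density in the cited corollary is phrased only for ergodic measures, the reduction from $\mathcal{P}_T(\rz)$ to $\mathcal{E}_T(\rz)$ is done via the ergodic decomposition theorem together with the affineness of $\nu \mapsto \int f\, \mathrm{d}\nu$.

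The specific formulas for $\lambda_F$ and $\lambda_u$ then follow by applying the general identity with $f=F$ and $f = \log|T'|$, both continuous thanks to Lemma~\ref{lmc}, and plugging in the variational expressions from Proposition~\ref{proplf}. The main obstacle is entirely absorbed by the citation: I would not reprove Sigmund's specification argument, but only remark that the hypotheses hold for our $T$. A minor technical point is to make sure the density statement applied to $\mathcal{E}_T(\rz)$ transfers to $\mathcal{P}_T(\rz)$; this is the only place where the affineness of the functional $\nu\mapsto\int f\,\mathrm{d}\nu$ is used in an essential way.
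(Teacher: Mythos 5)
Your proposal is correct and takes essentially the same approach as the paper: the general identity is attributed to Sigmund's density of periodic-orbit measures \cite[Corollary~5]{MR718829} (valid here because the uniformly expanding circle map has the specification property), and the two special cases follow from the continuity of $F$ and $\log|T'|$ (Lemma~\ref{lmc}) combined with Proposition~\ref{proplf}. One small caveat: ``expansive and topologically mixing'' does not by itself imply specification in general; the justification should instead rest on the uniform expansion (e.g.\ via a Markov partition giving a semiconjugacy to a topologically mixing subshift of finite type), although the conclusion is of course correct for $\mathcal{C}^1$ uniformly expanding maps of the circle.
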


In Proposition~\ref{prop2.1}, the equalities for $\lambda_F$ and $\lambda_u$ follow from the continuity of the functions $F$ and $\log |T'|$ (Lemma~\ref{lmc}), and from Proposition~\ref{proplf}.

\subsubsection{Effective lower bound on the Lyapunov exponents}\label{eff_low}

Thanks to the periodic orbits, we can numerically provide a lower bound on the Lyapunov exponents $\lambda_F$ and $\lambda_u$, as shown in Proposition~\ref{prop2.1}. Computing a lower bound for the Lyapunov exponents requires three steps: finding periodic points, computing a lower bound for $q$, and then computing a lower bound for the Lyapunov exponents.

\paragraph{Periodic point search}~

We compute an approximation of periodic orbits of $T$ of period less than some $M\in\N$ with an error smaller than $\varepsilon>0$. To achieve this, we search for all $k\leq M$, one representative of each primitive orbit of period $k$. 

We assume that $T(0)=0$ (see Remark~\ref{rem1}) and $T'>0$. Let $\widetilde{T}:\mathbb{R}\to\mathbb{R}$ be the lift of $T$ such that $\widetilde{T}(0)=0$. For $k\in\mathbb{N}^*$, the $k$-periodic points of $T$ are the points $x\in[0,1[$ such that $\widetilde{T}^k(x)-x\in\mathbb{N}$. If $d\geq 2$ is the topological degree of $T$, then there are $d^k-1$ periodic points of $T$ of period $k$ which are the unique $x_j\in[0,1[$ for $j\in\llbracket 0,d^k-2\rrbracket$ such that $\widetilde{T}^k(x_j)-x_j=j$. Using standard combinatorics, we choose one representative per primitive orbit of period $k$, and approximate it by dichotomy. We can then approximate the rest of the orbit by taking the images by $T$ of our first approximation (and use again dichotomy to obtain an error smaller than $\varepsilon$ because $T$ is expanding). For each $k$-periodic point $x$, we obtain a family of points $(y_i)_{i\in\llbracket 0,k-1\rrbracket}$ such that for all $i\in\llbracket 0,k-1\rrbracket$, $|T^i(x)-y_i|<\varepsilon$. For all $i\in\llbracket 0,k-1\rrbracket$, let $I_i=[y_i-\varepsilon,y_i+\varepsilon]$. Then, $(I_i)_{i\in\llbracket0,k-1\rrbracket}$ is a family of intervals of size $2\varepsilon$ such that for all $i\in\llbracket0,k-1\rrbracket$, $T^i(x)\in I_i$.

\paragraph{Lower bound on $q$}~

To compute a lower bound on $\lambda_F$, we must first compute a lower bound on $q \circ T$ along the periodic orbit (and thus of $q$) because $s\in[0,1]\mapsto \partial_s\varphi(x,s)$ is an increasing function for all $x\in\RZ$.

\begin{lemma}\label{lem_lb_q}
    Assume (H\ref{hyp4}) and that $T(0)=0$. Let $k\in\mathbb{N}^*$, $x\in\RZ$ be a $k$-periodic point of $T$, and $(I_i)_{i\in\llbracket0,k-1\rrbracket}$ be a family of intervals such that, for all $i\in\llbracket0,k-1\rrbracket$, $x_i\defeq T^i(x)\in I_i$. Then, for all $n\in\mathbb{N}$, \begin{align*}
    q(x)\geq \inf(\underbrace{\varphi(I_0,\varphi(I_1,\varphi(I_2,\ldots\varphi(I_{k-1},\varphi(I_0,\varphi(I_1, \ldots\varphi(I_{n-1[k]}}_{n \text{ times}},0)\ldots),
\end{align*}
where $m[k]$ is the remainder of the Euclidean division of $m$ by $k$.
Moreover, for all $i\in\llbracket0,k-1\rrbracket$, if $q_i\leq q(x_i)$, then $\inf(\varphi(I_{i-1[k]},q_i))\leq q(x_{i-1[k]})$.
\end{lemma}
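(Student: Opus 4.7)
The plan is to prove the auxiliary ``one-step'' bound (the second statement) first, and then to iterate it $n$ times to derive the main inequality.

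For the second statement, I would fix $i\in\llbracket 0,k-1\rrbracket$ and suppose $q_i\leq q(x_i)$. By the $k$-periodicity of $x$, one has $T(x_{i-1[k]})=x_i$: this is immediate for $i\geq 1$, and for $i=0$ it follows from $T(x_{k-1})=T^kx=x=x_0$. Combining the invariance equation of Proposition~\ref{prop1},
\[
q(x_{i-1[k]})=\varphi(x_{i-1[k]},q(Tx_{i-1[k]}))=\varphi(x_{i-1[k]},q(x_i)),
\]
with the monotonicity of $\varphi$ in its second variable, I obtain $\varphi(x_{i-1[k]},q_i)\leq q(x_{i-1[k]})$. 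Since $x_{i-1[k]}\in I_{i-1[k]}$, the infimum over $I_{i-1[k]}$ is at most $\varphi(x_{i-1[k]},q_i)$, whence $\inf(\varphi(I_{i-1[k]},q_i))\leq q(x_{i-1[k]})$.

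For the first statement, I would iterate this one-step bound. Starting from the trivial inequality $0\leq q(x_{n[k]})$ and applying the second statement successively with $i=n[k],(n-1)[k],\dots,1,0$, each step shifts the index back by one modulo $k$ and yields a new lower bound of the form $\inf(\varphi(I_{i-1[k]},\cdot))$ on $q(x_{i-1[k]})$. After $n$ iterations I arrive at
\[
\inf\!\bigl(\varphi(I_0,\inf(\varphi(I_1,\dots,\inf(\varphi(I_{n-1[k]},0))\dots)))\bigr)\leq q(x).
\]
Since $\varphi(y,\cdot)$ is continuous and increasing, it commutes with infima, $\varphi(y,\inf S)=\inf\varphi(y,S)$, so the nested infima collapse into the single outer infimum appearing in the statement.

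The argument is mostly bookkeeping; the only delicate point is the indexing modulo $k$ at the wrap-around step $i=0$, where the $k$-periodicity of $x$ is crucially used. Beyond that, the proof only relies on the invariance equation for $q$ and the monotonicity of $\varphi$ in its fibre variable, both available under (H\ref{hyp4}).
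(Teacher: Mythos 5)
Your proof is correct, but it reorganizes the argument in a way that is genuinely different from the paper's. The paper obtains the first inequality directly from the monotone-limit definition of $q$ (Definition~\ref{def_q}): $q(x)\geq\varphi^{(n)}(x,0)$, which by Proposition~\ref{prop5} and $k$-periodicity expands to $\varphi(x_0,\varphi(x_1,\ldots,\varphi(x_{(n-1)[k]},0)\ldots))$, and then uses $x_i\in I_i$ to pass to the infimum; the second inequality is proved separately from the invariance equation (Proposition~\ref{prop1}) and monotonicity. You instead prove the one-step bound first and then derive the first inequality by iterating it $n$ times, starting from $0\leq q(x_{n[k]})$. That is a perfectly valid alternative: the iteration is morally the identity $q(x)=\varphi^{(n)}(x,q(T^nx))\geq\varphi^{(n)}(x,0)$, so both routes rest on the same structure; yours buys uniformity (everything flows from the invariance equation and one-step monotonicity), at the price of an extra observation that $\varphi(y,\cdot)$ commutes with infima (valid since $\varphi(y,\cdot)$ is continuous and increasing), which the paper's route avoids because it only evaluates $\varphi^{(n)}$ at the actual periodic points before passing to the infimum. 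Two small remarks: the sequence of indices you list, ``$i=n[k],(n-1)[k],\dots,1,0$'', has an off-by-one issue — $n$ applications of the one-step bound use $i=n[k],(n-1)[k],\dots,1$ and the last one produces the bound on $q(x_0)$; and when collapsing the nested infima you should note that this yields \emph{equality} (not merely the useful direction of inequality), which is why the argument closes cleanly.
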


\begin{proof}
   By Definition~\ref{def_q}, $q(x)\geq \varphi^{(n)}(x,0)$. The conclusion of the first inequality follows by induction, thanks to Proposition~\ref{prop5} and the $k$ periodicity of $x$. The second inequality follows from Proposition~\ref{prop1}, and because for all $x\in\RZ$, $s\mapsto \varphi(x,s)$ is increasing.
\end{proof}

Lemma~\ref{lem_lb_q} gives by induction an effective method to obtain a lower bound on $q$ along a periodic orbit. Indeed, we first compute a lower bound on $q(x)$. Secondly, we compute a lower bound on $q(T^{k-1}x)$ because $T(T^{k-1}(x))=x$. Then, by decreasing induction on $i\in\llbracket1,k-2\rrbracket$, we compute a lower bound on $q(T^i x)$ .

\paragraph{Lower bound on the Lyapunov exponents}~

Let $M\in\N$. In the previous two paragraphs, we computed the periodic orbits of $T$ of size smaller than $M$ and obtained a lower bound for $q$ along these periodic orbits. By Proposition~\ref{prop2.1},\begin{align}\label{in_lf}
    \lambda_F\geq \sup_{m\leq M}\sup_{x\in \text{Per}_m(\rz)}\frac{1}{m}\sum_{i=0}^{m-1}\partial_s\varphi(T^ix,q(T^{i+1}x)).
\end{align}
For all $x\in\RZ$, $s\mapsto\partial_s\varphi(x,s)$ is increasing. This provides a lower bound for $\lambda_F$, as described in Algorithm~\ref{alg_lb}.

\begin{algorithm}[H]
\caption{Compute a lower bound on $\lambda_F$}
\label{alg_lb}
\begin{algorithmic}[1]
\Require $\varepsilon > 0$, integer $M \geq 0$
\Ensure  a lower bound on $\lambda_F$
\State $\mathcal{L}_{\text{periodic\_orbits}} \gets \textsc{FindPeriodicOrbits}(M, \varepsilon)$
\cmt{\textsc{FindPeriodicOrbits} return all periodic orbits of $T$ of length $\leq M$, approximated with error $\leq \varepsilon$}
\State $\text{bound} \gets -\infty$

\ForAll{$\text{orbit} \in \mathcal{L}_{\text{periodic\_orbits}}$}
    \State $\text{bound\_q} \gets \textsc{LowerBoundQ}(\text{orbit}, \varepsilon)$
    \cmt{\textsc{LowerBoundQ} computes a lower bound on $q(Tx)$ for all $x$ in the orbit (see Lemma~\ref{lem_lb_q})}
    \State $\text{orbit\_bound} \gets \uo{i=1}{\text{length}(\text{orbit})}{\sum} \inf \partial_s \varphi([\text{orbit}[i]-\varepsilon,\text{orbit}[i]+\varepsilon], \text{bound\_q}[i])$
    \cmt{\eqref{in_lf}}
    \State $\text{bound} \gets \max(\text{bound}, \text{orbit\_bound})$
\EndFor
\State  \Return $\text{bound}$
\end{algorithmic}
\end{algorithm}

Thanks to Proposition~\ref{prop2.1}, we can compute a lower bound on the Lyapunov exponent of the transformation $T$ using the same method. For all $M\in\N^*$, \begin{align*}
    \lambda_u\geq \sup_{m\leq M}\sup_{x\in \text{Per}_m(\rz)}\frac{1}{m}\sum_{i=0}^{m-1}\log|T'(T^ix)|.
\end{align*}
However, the Lyapunov exponent of the transformation $T$ does not depend on $q$, making it easier to compute.

\subsection{Upper bound on the Lyapunov exponents}\label{s2.2}

This subsection presents a useful method for computing upper bounds on the Lyapunov exponents $\lambda_F$ and $\lambda_u$. These exponents are approximated by finding upper bounds of the Birkhoff sums of the functions $F$ and $\log |T'|$ on small intervals and using the expressions given in Corollary~\ref{coro2.2}.

\subsubsection{An expression of the Lyapunov exponents for computing an upper bound}

Corollary~\ref{coro2.2} provides an alternative expression for the Lyapunov exponents $\lambda_F$ and $\lambda_u$, which is useful for computing an upper bound on these exponents.

\begin{corollary}\label{coro2.2}
 Assume (H\ref{hyp4}). For all continuous functions $f:\RZ\to \mathbb{R}$, \begin{align}\label{abc}
        \sup_{\nu\in\mathcal{P}_T(\rz)}\int_{\rz} f(x) \,\mathrm{d}\nu(x)=\sup_{x\in\rz}\inf_{k\in\mathbb{N}^*}\frac{1}{k}\sum_{i=0}^{k-1} f(T^ix).
    \end{align}
In particular,
\begin{align*}
\lambda_F=\sup_{x\in\rz}\inf_{k\in\mathbb{N}^*}\frac{1}{k}\sum_{i=0}^{k-1} F(T^ix) \quad \text{and} \quad \lambda_u=\sup_{x\in\rz}\inf_{k\in\mathbb{N}^*}\frac{1}{k}\sum_{i=0}^{k-1} \log|T'(T^ix)|.
\end{align*}
\end{corollary}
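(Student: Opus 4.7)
The plan is to prove the general identity~\eqref{abc} for an arbitrary continuous $f:\RZ\to\mathbb{R}$, and then specialise to $f=F$ and $f=\log|T'|$ (both continuous by Lemma~\ref{lmc}) to obtain the two Lyapunov-exponent formulas. I would prove the two inequalities in~\eqref{abc} separately.

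For the inequality $\leq$, I would fix $x\in\rz$, set $c\defeq\inf_{k\geq 1}\frac{1}{k}\sum_{i=0}^{k-1}f(T^ix)$, and consider the empirical measures $\mu_N\defeq\frac{1}{N}\sum_{i=0}^{N-1}\delta_{T^ix}$. By weak-$*$ compactness of $\mathcal{P}(\rz)$, some subsequence $(\mu_{N_j})$ converges to a measure $\nu\in\mathcal{P}(\rz)$. The standard Krylov--Bogolyubov estimate $\big|\int g\,\mathrm{d}\mu_N-\int g\circ T\,\mathrm{d}\mu_N\big|\leq\frac{2\lVert g\rVert_\infty}{N}$, valid for every continuous $g:\rz\to\R$, shows that $\nu$ is $T$-invariant, hence $\nu\in\mathcal{P}_T(\rz)$. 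Since $f$ is continuous and $\int f\,\mathrm{d}\mu_N=\frac{1}{N}\sum_{i=0}^{N-1}f(T^ix)\geq c$ for every $N$, passing to the limit gives $\int f\,\mathrm{d}\nu\geq c$; taking the supremum over $x$ yields the inequality.

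For the reverse inequality $\geq$, I would first use Proposition~\ref{prop2.1} to reduce to periodic-orbit averages, and then invoke the combinatorial cycle lemma (Dvoretzky--Motzkin / Raney). Fix a periodic orbit $\mathcal{O}=\{y_0,\ldots,y_{p-1}\}$ with $Ty_i=y_{(i+1)\bmod p}$ and set $c\defeq\frac{1}{p}\sum_{i=0}^{p-1}f(y_i)$. Applied to the zero-sum sequence $(f(y_i)-c)_{i=0}^{p-1}$, the cycle lemma produces an index $j$ such that every cyclic partial sum $\sum_{i=0}^{k-1}\bigl(f(y_{(j+i)\bmod p})-c\bigr)$ is nonnegative for $k=1,\ldots,p$. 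Extending by $p$-periodicity, for an arbitrary $k=qp+r$ with $0\leq r<p$,
\begin{align*}
\sum_{i=0}^{k-1}f(T^iy_j)=qpc+\sum_{i=0}^{r-1}f(y_{(j+i)\bmod p})\geq qpc+rc=kc,
\end{align*}
so $\inf_{k\geq 1}\frac{1}{k}\sum_{i=0}^{k-1}f(T^iy_j)\geq c$. Taking the supremum over all periodic orbits and applying Proposition~\ref{prop2.1} gives $\sup_{x\in\rz}\inf_{k}\frac{1}{k}\sum_{i=0}^{k-1}f(T^ix)\geq\sup_{\nu\in\mathcal{P}_T(\rz)}\int f\,\mathrm{d}\nu$.

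The main obstacle is the $\geq$ direction: a naive periodic starting point does not suffice, because generically $\inf_{k\geq 1}\frac{1}{k}\sum_{i=0}^{k-1}f(T^ix)$ is \emph{strictly smaller} than the orbit average, so one must carefully choose the phase on each periodic orbit. The cycle lemma, classical in the ballot-problem literature, is exactly the combinatorial fact that supplies the right starting point; once it is in place, the rest is routine and the specialisations to $\lambda_F$ and $\lambda_u$ are immediate.
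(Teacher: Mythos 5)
Your proof is correct and follows essentially the same route as the paper. For the direction $\sup_{\nu}\int f\,\mathrm{d}\nu \leq \sup_x\inf_k\frac{1}{k}\sum f(T^ix)$ you reduce to periodic orbits via Proposition~\ref{prop2.1} and then invoke the cycle lemma to choose a phase with all forward Birkhoff averages $\geq$ the orbit average, which is exactly the role Pliss's lemma plays in the paper's argument (on a periodic orbit the two statements coincide); for the reverse direction you unpack the Krylov--Bogolyubov construction directly where the paper simply cites the semi-uniform ergodic theorem, but the content is the same elementary compactness argument.
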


\begin{proof}Let $f:\RZ\to \mathbb{R}$ be a continuous function.
    By Proposition~\ref{prop2.1}, \begin{align*}
        \sup_{\nu\in\mathcal{P}_T(\rz)}\int_{\rz} f(x) \,\mathrm{d}\nu(x)=\sup_{k\in\mathbb{N}^*}\sup_{x\in \text{Per}_k(\rz)}\frac{1}{k}\sum_{i=0}^{k-1}f(T^ix). 
    \end{align*} 
    Let $k\in\mathbb{N}^*$ and $x\in \text{Per}_k(\rz)$. By Pliss's lemma \cite[Lemma 11.8]{MR889254}, there exists $j\in\mathbb{N}$ such that \begin{align*}
        \frac{1}{k}\sum_{i=0}^{k-1}f(T^ix)= \inf_{n\in\mathbb{N}^*}\frac{1}{n}\sum_{i=0}^{n-1}f(T^{i+j}x)\leq\sup_{y\in\rz}\inf_{n\in\mathbb{N}^*}\frac{1}{n}\sum_{i=0}^{n-1} f(T^iy).
    \end{align*}
    Having established that \begin{align*}
        \sup_{\nu\in\mathcal{P}_T(\rz)}\int_{\rz} f(x) \,\mathrm{d}\nu(x)\leq\sup_{x\in\rz}\inf_{k\in\mathbb{N}^*}\frac{1}{k}\sum_{i=0}^{k-1} f(T^ix),
    \end{align*}
    it remains to prove the converse inequality to obtain Equality~\eqref{abc}. It is a direct corollary of the semi-uniform ergodic theorem \cite[Theorem 1.9]{MR1734626}. The equalities on $\lambda_F$ and $\lambda_u$ follow from the continuity of the functions $F$ and $\log |T'|$ (see Lemma~\ref{lmc}), and from Proposition~\ref{proplf}.
\end{proof}

\subsubsection{Effective upper bound on the Lyapunov exponents}\label{eff}

Computing an upper bound for the Lyapunov exponent $\lambda_F$ requires three steps, which are described in the next three paragraphs. First, we find a constant $K<1$ such that $K\geq q(x)$ for all $x\in\RZ$. Second, we compute an upper bound on $q$ using the expression $\varphi^{(n)}(x,K)$ with $n$ large. Finally, we compute an upper bound for the Lyapunov exponents using the upper bound on $q$ and the expression for the Lyapunov exponent $\lambda_F$ given in Corollary~\ref{coro2.2}.

\paragraph{Upper bound on $K$}~

In the uniformly supercritical case, there exists a constant $K<1$ such that $K\geq q(x)$ for all $x\in\RZ$ \cite[Lemma~4.1.1]{morand2024galtonwatsonprocessesdynamicalenvironments}. Lemma~\ref{lem2.1} justifies using the function $x\mapsto \varphi^{(n)}(x,K)$ with $n$ large as an upper bound on $q$.

\begin{lemma}\label{lem2.1}
Assume (H\ref{hyp4}). Let $K<1$ be such that $K\geq q(x)$ for all $x\in\RZ$. Then:\begin{itemize}
    \item For all $n\in\N$ and $x\in\RZ$, $\varphi^{(n)}(x,K)\geq q(x)$.
    \item The sequence of functions $(x\mapsto\varphi^{(n)}(x,K))_{n\in\mathbb{N}}$ converges uniformly to $q$.
\end{itemize}
\end{lemma}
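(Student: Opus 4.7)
The plan is to handle the two assertions separately, the first by a direct induction, and the second by the mean value theorem combined with the alternative expression of $\lambda_F$ given in Proposition~\ref{proplf}.

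For the first assertion, I would prove $\varphi^{(n)}(x,K)\geq q(x)$ by induction on $n\in\N$, uniformly in $x\in\RZ$. For $n=0$ the statement reduces to the hypothesis $K\geq q(x)$. For the inductive step, using the definition of $\varphi^{(n)}$, the invariance equation $q(x)=\varphi(x,q(Tx))$ from Proposition~\ref{prop1}, the inductive hypothesis applied at $Tx$, and the fact that $s\mapsto\varphi(x,s)$ is increasing, I get
\begin{align*}
\varphi^{(n)}(x,K)=\varphi(x,\varphi^{(n-1)}(Tx,K))\geq \varphi(x,q(Tx))=q(x).
\end{align*}

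For the uniform convergence, the key observation is that iterating the invariance equation gives $q(x)=\varphi^{(n)}(x,q(T^n x))$ for every $n\in\N$. Since $K\geq q(T^nx)$ and both points lie in $[0,K]\subset[0,1)$ where $\varphi^{(n)}(x,\cdot)$ is differentiable (using hypothesis (H\ref{hyp4})c)), the mean value theorem yields, for each $x\in\RZ$ and $n\in\N$, a $\xi_n(x)\in[q(T^nx),K]$ such that
\begin{align*}
\varphi^{(n)}(x,K)-q(x)=\partial_s\varphi^{(n)}(x,\xi_n(x))\cdot(K-q(T^nx)).
\end{align*}
A short induction shows that $\varphi^{(n)}(x,\cdot)$ is a composition of increasing convex functions and is therefore itself increasing and convex on $[0,1)$, so $s\mapsto\partial_s\varphi^{(n)}(x,s)$ is nondecreasing. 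Combined with $\xi_n(x)\leq K$ and $0\leq K-q(T^nx)\leq K$, this gives
\begin{align*}
0\leq \sup_{x\in\RZ}\bigl(\varphi^{(n)}(x,K)-q(x)\bigr)\leq K\cdot\sup_{x\in\RZ}\partial_s\varphi^{(n)}(x,K).
\end{align*}

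The final step is to show that $\sup_{x\in\RZ}\partial_s\varphi^{(n)}(x,K)\to 0$. Since $0<K<1$, Proposition~\ref{proplf} applied with $a=K$ gives
\begin{align*}
\lim_{n\to\infty}\frac{1}{n}\log\sup_{x\in\RZ}\partial_s\varphi^{(n)}(x,K)=\lambda_F,
\end{align*}
and under (H\ref{hyp4}) the Lyapunov exponent $\lambda_F$ is strictly negative (as recalled in the paragraph preceding Theorem~\ref{thm2.6}). Therefore $\sup_{x\in\RZ}\partial_s\varphi^{(n)}(x,K)$ decays exponentially to $0$, which yields the desired uniform convergence.

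The main (and really only) delicate point is the control of $\partial_s\varphi^{(n)}(x,\xi_n(x))$ at the non-explicit intermediate value $\xi_n(x)$; the monotonicity of $\partial_s\varphi^{(n)}(x,\cdot)$ coming from the convexity hypothesis on $\varphi$ is precisely what makes it possible to replace this unknown value by the fixed constant $K$, and then to invoke the exponential decay encoded in $\lambda_F<0$.
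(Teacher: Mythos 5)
Your proof is correct, and both bullets are handled soundly. For the first bullet your route is essentially the same as the paper's, only presented differently: the paper observes that iterating the invariance relation gives $q(x)=\varphi^{(n)}(x,q(T^nx))$ and then directly applies the monotonicity of $s\mapsto\varphi^{(n)}(x,s)$ with $K\geq q(T^nx)$; you instead unfold this as an induction on $n$, but the underlying mechanism (monotonicity plus the invariance equation of Proposition~\ref{prop1}) is identical.

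For the uniform convergence the routes genuinely differ. The paper simply cites \cite[Corollary~4.1.2]{morand2024galtonwatsonprocessesdynamicalenvironments}, which already records the uniform convergence $\varphi^{(n)}(\cdot,a)\to q$ for $a<1$. You reconstruct a self-contained proof instead: writing $q(x)=\varphi^{(n)}(x,q(T^nx))$, applying the mean value theorem to get a factor $\partial_s\varphi^{(n)}(x,\xi_n(x))$, using the convexity of $s\mapsto\varphi^{(n)}(x,s)$ to push $\xi_n(x)$ up to $K$, and then invoking Proposition~\ref{proplf} with $a=K$ together with $\lambda_F<0$ to obtain exponential decay. This is correct (note $K>0$ since $q>0$ under (H\ref{hyp4})d), so $K\in(0,1)$ is admissible in Proposition~\ref{proplf}). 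The trade-off is that your argument makes the mechanism of convergence transparent at the cost of relying on Proposition~\ref{proplf} and the negativity of $\lambda_F$, which the paper also imports from the same reference; the paper's one-line citation is shorter but opaque, while yours shows that the cited corollary is really a convexity-plus-Lyapunov-exponent estimate. Either way the logical dependencies are comparable, and there is no circularity.
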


\begin{proof}As $K<1$, the uniform convergence of this sequence of functions is proved by \cite[Corollary~4.1.2]{morand2024galtonwatsonprocessesdynamicalenvironments}.
Let $x\in\RZ$ and $n\in\mathbb{N}$.
As $s\mapsto \varphi^{(n)}(x,s)$ is increasing and by Proposition \ref{prop1},
\begin{align*}
    \varphi^{(n)}(x,K)&\geq\varphi^{(n)}(x,q(T^nx))=q(x).\qedhere
\end{align*}
\end{proof}

We need to find a suitable $K<1$. If there exists $N\in\N^*$ such that, for all $x\in\RZ$, $\varphi^{(N)}(x,K)\leq K$, then $K$ is suitable. However, we may choose a value of $N$ depending on $x$ by Lemma~\ref{lem_k2}.

\begin{lemma}\label{lem_k2}
Assume $(H\ref{hyp4})$. Let $K<1$ be such that for all $x\in\RZ$, there exists $N_x\in\N^*$ such that $\varphi^{(N_x)}(x,K)\leq K$. Then, $q(x)\leq K$ for all $x\in\RZ$.
\end{lemma}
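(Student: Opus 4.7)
The plan is to construct, by induction using the semigroup identity \eqref{eq5}, a sequence of integers $M_k\to\infty$ along which $\varphi^{(M_k)}(x,K)\le K$, and then to conclude by monotonicity from the definition of $q$. The delicate point is that the hypothesis provides a $N_x$ that varies with $x$, so we cannot iterate a single map; the remedy is to chain the hypothesis along the orbit of $x$ under $T$, regrouping via \eqref{eq5}.

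\textbf{Step 1: construction of the sequence $M_k$.} Fix $x\in\RZ$ and set $x_0=x$. By hypothesis there exists $N_{x_0}\in\N^*$ with $\varphi^{(N_{x_0})}(x_0,K)\le K$. Define inductively $x_{j+1}\defeq T^{N_{x_j}}x_j$ and choose $N_{x_{j+1}}\in\N^*$ with $\varphi^{(N_{x_{j+1}})}(x_{j+1},K)\le K$. Set $M_0\defeq 0$ and $M_k\defeq \sum_{j=0}^{k-1}N_{x_j}$ for $k\ge 1$. Since $N_{x_j}\ge 1$, we have $M_k\ge k$, hence $M_k\to\infty$.

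\textbf{Step 2: $\varphi^{(M_k)}(x,K)\le K$ for all $k\in\N$.} We prove this by induction. The case $k=0$ is immediate since $\varphi^{(0)}(x,K)=K$. Assume $\varphi^{(M_k)}(x,K)\le K$. By construction $T^{M_k}x=x_k$, so by Proposition~\ref{prop5},
\begin{align*}
\varphi^{(M_{k+1})}(x,K)=\varphi^{(M_k+N_{x_k})}(x,K)=\varphi^{(M_k)}\bigl(x,\varphi^{(N_{x_k})}(x_k,K)\bigr).
\end{align*}
Using the hypothesis $\varphi^{(N_{x_k})}(x_k,K)\le K$ and the fact that $s\mapsto \varphi^{(M_k)}(x,s)$ is increasing (it is a composition of increasing maps), we obtain $\varphi^{(M_{k+1})}(x,K)\le \varphi^{(M_k)}(x,K)\le K$.

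\textbf{Step 3: conclusion.} By Definition~\ref{def_q}, $q(x)=\lim_{n\to\infty}\varphi^{(n)}(x,0)$. Since $0\le K$ and $s\mapsto \varphi^{(n)}(x,s)$ is increasing, $\varphi^{(n)}(x,0)\le \varphi^{(n)}(x,K)$ for all $n\in\N$. Passing to the limit along $M_k$,
\begin{align*}
q(x)=\lim_{k\to\infty}\varphi^{(M_k)}(x,0)\le \liminf_{k\to\infty}\varphi^{(M_k)}(x,K)\le K,
\end{align*}
which concludes the proof. The main obstacle is purely bookkeeping, namely verifying that the orbit-dependent return times $N_{x_j}$ can be concatenated via \eqref{eq5} while preserving the inequality $\le K$; the monotonicity of $\varphi$ in $s$ makes this chaining work at every step.
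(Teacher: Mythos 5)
Your proof is correct and follows the same core strategy as the paper's: chain the pointwise stopping times $N_{x_j}$ along the $T$-orbit of $x$ to build a divergent sequence of integers $M_k$ with $\varphi^{(M_k)}(x,K)\le K$, using the semigroup identity of Proposition~\ref{prop5} together with monotonicity of $\varphi^{(n)}$ in $s$. The only (minor) difference is the final step: the paper invokes the uniform convergence $\varphi^{(n)}(\cdot,K)\to q$ from \cite[Corollary~4.1.2]{morand2024galtonwatsonprocessesdynamicalenvironments}, whereas you conclude slightly more elementarily from Definition~\ref{def_q} and the chain $\varphi^{(M_k)}(x,0)\le\varphi^{(M_k)}(x,K)\le K$.
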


\begin{proof}
Let $x\in\RZ$. We define a sequence of positive integers $(n_i)_{i\in\N}$ by the following induction relation:
\begin{align*} 
\left\{\begin{array}{ll}
  & n_0=N_x, \\
  & n_{i+1}=N_{T^{n_0+\ldots+n_i}x} \text{ for all } i\in\N.
   \end{array}\right.
\end{align*}
Then for all $i\in\N$,
\begin{align*}
    \varphi^{(n_0+\ldots+n_i)}(x,K)&=\varphi^{(n_0)}(x,\varphi^{(n_1)}(T^{n_0}x,\ldots,\varphi^{(n_i)}(T^{n_0+\ldots+n_{i-1}}x,K)\ldots)\leq K, 
\end{align*}
by induction, thanks to the definition of the sequence $(n_i)_{i\in\N}$. By \cite[Corollary~4.1.2]{morand2024galtonwatsonprocessesdynamicalenvironments}, \begin{align*}
    \varphi^{(n_0+\ldots+n_i)}(x,K)\underset{i\to +\infty}{\longrightarrow} q(x).
\end{align*} So, $q(x)\leq K$.
\end{proof}

\begin{remark}\label{rem2}
    In practice, it is sometimes difficult to check if a process is in the uniformly supercritical regime. However, Lemma~\ref{lem_k2} gives a sufficient condition for this to be the case.
\end{remark}

By Lemma~\ref{lem_k2}, we can certify whether a constant $0<C<1$ is an upper bound on $q$. This method is used in Algorithm~\ref{alg_K}. The algorithm returns true if it is verified that $C$ is an upper bound on $q$. It returns false if it is not verified (note that this does not necessarily imply that $C$ is not an upper bound of $q$).

\begin{algorithm}[H]
\caption{Verify whether $C$ is an upper bound on $\sup_{x\in\mathbb{R}} q(x)$}
\label{alg_K}
\begin{algorithmic}[1]
\Require $0 < C < 1$, integer $n_{\max} \geq 0$
\Ensure \texttt{True} if it is verified that $C \geq \sup_{x \in \mathbb{R}} q(x)$, \texttt{False} otherwise

\State Let $\mathcal{L}_{\text{interval}} \gets \{ [0, 1] \}$
\For{$i = 1$ to $n_{\max}$}
    \State Let $\mathcal{L}_{\text{new}} \gets$ empty list
    \cmt{$\mathcal{L}_{\text{new}}$ is the list of intervals after an iteration}
    \ForAll{$I \in \mathcal{L}_{\text{interval}}$}
        \If{for all $j = 1$ to $\left\lfloor \frac{i}{2} \right\rfloor + 1$, we have $\sup \varphi^{(j)}(I, C) > C$}
            \State Let $I_1, I_2 \gets \textsc{Split}(I)$
            \cmt{\textsc{Split} divides an interval into two intervals of the same size}
            \State Append $I_1$ and $I_2$ to $\mathcal{L}_{\text{new}}$
        \EndIf
    \EndFor
    \State $\mathcal{L}_{\text{interval}} \gets \mathcal{L}_{\text{new}}$
    \If{$\mathcal{L}_{\text{interval}}$ is empty}
        \State \Return \texttt{True}
    \EndIf
\EndFor
\State \Return \texttt{False}
\end{algorithmic}
\end{algorithm}

We need to identify a good candidate $C$ to be an upper bound on $q$. We estimate $\underset{x\in\rz}{\sup}q(x)$ by considering $C_{N,M,\varepsilon}\defeq\underset{i\in\llbracket0,2^N\rrbracket}{\sup}\varphi^{(M)}\left(\frac{i}{2^N},0\right)+\varepsilon$ with $\varepsilon>0$ and $M,N\in\N$. This is an upper bound on $q$ if $M$ and $N$ are large enough (with $\varepsilon>0$ fixed), because $q$ is continuous \cite[Theorem 1.3.5]{morand2024galtonwatsonprocessesdynamicalenvironments} and $\big(x\mapsto\varphi^{(n)}(x,0)\big)_{n\in\N}$ converges uniformly to $q$ \cite[Corollary~4.1.2]{morand2024galtonwatsonprocessesdynamicalenvironments}. Thanks to Algorithm~\ref{alg_K}, we check if $C_{N,M,\varepsilon}\defeq\underset{i\in\llbracket0,2^N\rrbracket}{\sup}\varphi^{(M)}\left(\frac{i}{2^N},0\right)+\varepsilon$ is an upper bound on $\underset{x\in\rz}{\sup}q(x)$. 
If $C_{N,M,\varepsilon}$ is not suitable, we will try with a new constant $\widetilde{C}\defeq C_{N,M,\varepsilon} + (1-C_{N,M,\varepsilon})\times\delta$ with $\delta>0$ (and this can be iterated if necessary). In practice, we can verify the condition: $\sup \varphi^{(N)}(I,\widetilde{C})\leq \widetilde{C}$ only on those intervals that did not satisfy it with the previous constant (because $\varphi^{(N)}$ is convex with respect to the second variable and $\varphi^{(N)}(x,1)=1$ for all $x\in\RZ$).

\paragraph{Upper bound on $q$}~

Let $0<K<1$ be an upper bound on $q$. By Lemma~\ref{lem2.1}, $x\mapsto\varphi^{(n)}(x,K)$ is an upper bound on $q$. In practice, we require an upper bound on $q$ which is piecewise constant over small intervals (see Inequality~\eqref{in_ub}). Let $I$ be an interval. For all $x\in I$, \begin{align}\label{in_uq}
    q(x)\leq \sup q(I)\leq \sup\varphi^{(n)}(I,K)=\sup\varphi(I,\varphi(TI,\ldots,\varphi(T^{n-1}I,K)\ldots).
\end{align}
Even if $I$ is small, if $j\in\N$ is large, then $T^{j}I$ can be large, meaning that the estimate \eqref{in_uq} may become less accurate as $n$ increases. To avoid this problem, we replace the function $\varphi$ by $\varphi_K:(x,s)\mapsto\min(\varphi(x,s),K)$ in \eqref{in_uq} (we can then define $\varphi_K^{(n)}$ for all $n\in\N$ by induction with the same relation as for $\varphi$, see \eqref{eq5}). By Lemma~\ref{lem2.1}, this is still an upper bound on $q(x)$, and it decreases with respect to $n$, because for all $x\in\RZ$, \begin{align}\label{minphiK}
    \varphi_K^{(n)}(x,K)=\min_{0\leq i\leq n}\big(\varphi^{(i)}(x,K)\big).
\end{align}

\paragraph{Upper bound on the Lyapunov exponents}~

Let $n\in\N^*$, $\big(k_{\max}(j)\big)_{j\in\llbracket1,n\rrbracket}\in(\N^*)^{n}$, and $(I_j)_{j\in\llbracket1,n\rrbracket}$ be a collection of intervals such that $\bigcup_{j=1}^n I_j=\RZ$. Then, by Corollary~\ref{coro2.2}, \begin{align}\label{in_ub}
    \lambda_F\leq\max_{j\in\llbracket1,n\rrbracket}\inf_{k\leq k_{\max}(j)}\sup\frac{1}{k}\sum_{i=0}^{k-1} \partial_s\varphi(T^iI_j,q(T^{i+1}I_j)).
\end{align}

With Algorithm~\ref{alg_ub}, we compute an upper bound on $\lambda_F$ using Inequality~\eqref{in_ub}. Smaller and smaller intervals are considered to improve the accuracy of the bound. To achieve this, we split the intervals into two. Hence, we only have intervals of the form $\left[\frac{i}{2^N},\frac{i+1}{2^N}\right]$ with $N\in\N$ and $i\in\llbracket0,2^N-1\rrbracket$, which are of length of $2^{-N}$. We define the structure \textsc{StructureOfInterval} to manage these intervals. 

\begin{algorithm}[H]
\caption{Define a mutable structure to represent an interval}
\label{str}
\begin{algorithmic}[1]
\Statex \textbf{Structure} \textsc{StructureOfInterval}
\State \hspace{1em} \textbf{interval:} \texttt{interval} \cmt{The interval}
\State \hspace{1em} \textbf{length:} \texttt{int} \cmt{The integer $N$ such that the interval length is $2^{-N}$}
\State \hspace{1em} \textbf{is\_new:} \texttt{bool} \cmt{True if the interval was created in the latest iteration}
\State \hspace{1em} \textbf{bound:} \texttt{float} \cmt{The upper bound computed on this interval}
\end{algorithmic}
\end{algorithm}
At each iteration, we split into two a proportion $\delta$ of intervals on which the bound is the worst, and bounds are computed for the new intervals. After $n_{\text{iteration}}$ iterations, the worst bound is returned, which is an upper bound on $\lambda_F$.
In practice, for $k_{\max}$ in Equation~\eqref{in_ub}, we use $k_{\max}(I)=\left\lfloor \frac{N}{2} \right\rfloor + 1$, where $2^{-N}$ is the length of the interval $I$. This ensures that the iterates of the interval $I$ by $T$ remain small.

\begin{algorithm}[H]
\caption{Compute an upper bound on $\lambda_F$}
\label{alg_ub}
\begin{algorithmic}[1]
\Require $0 < \delta < 1$, integers $n_{\text{iteration}} \geq 0$ and $n_K \geq 0$
\Ensure an upper bound on $\lambda_F$
\State Let $\mathcal{L}_{\text{interval}} \gets \{ ([0, 1], 0, \texttt{True}, +\infty) \}$
\cmt{$\mathcal{L}_{\text{interval}}$ is a list of \textsc{StructureOfInterval} (see Algorithm~\ref{str})}
\State Let $K \gets \textsc{FindK}(n_K)$
\cmt{See Algorithm~\ref{alg_K}}
\For{$n_{\text{iteration}}$ times}
    \State $\mathcal{L}_{\text{interval}} \gets \textsc{ComputeBounds}(\mathcal{L}_{\text{interval}}, K)$
    \cmt{\textsc{ComputeBounds} uses Equation~\eqref{in_ub} to compute bounds for new intervals}
    \State $\mathcal{L}_{\text{interval}} \gets \textsc{SplitWorstIntervals}(\mathcal{L}_{\text{interval}}, \delta)$
    \cmt{\textsc{SplitWorstIntervals} splits into two a proportion $\delta$ of intervals on which the bound is the worst}
\EndFor
\State \Return the maximum bound over all elements in $\mathcal{L}_{\text{interval}}$
\end{algorithmic}
\end{algorithm}

Thanks to Corollary~\ref{coro2.2}, we can compute an upper bound on the Lyapunov exponent of the transformation $T$ using the same method. Let $n\in\N^*$, $\big(k_{\max}(j)\big)_{j\in\llbracket1,n\rrbracket}\in(\N^*)^{n}$, and a $(I_j)_{j\in\llbracket1,n\rrbracket}$ be a collection of intervals such that $\bigcup_{j=1}^n I_j=\RZ$. Then,  \begin{align*}
    \lambda_u\leq\max_{j\in\llbracket1,n\rrbracket}\inf_{k\leq k_{\max}(j)}\sup\frac{1}{k}\sum_{i=0}^{k-1} \log|T'(T^iI_j)|.
\end{align*}

However, unlike in the expression of $F$, $q$ does not appear in the expression of $\log |T'|$. Therefore, it is unnecessary to compute an upper bound on $q$ in order to bound the Lyapunov exponent $\lambda_u$.

\section{Computation and analysis}\label{section3}
In this section, we apply the algorithms presented in Section~\ref{sect_effective} to different examples. This enables us to obtain rigorous bounds on the probability of extinction $q$ and on the Lyapunov exponents $\lambda_u$ and $\lambda_F$. We then analyse the results, discuss the limitations of these algorithms, and present alternative methods.
\subsection{The examples}

We present some examples to which we can apply the algorithms presented in Section~\ref{sect_effective}.

\begin{example}\label{ex1} Let:\begin{itemize}
    \item For all $N\in\mathbb{Z}$ and $\varepsilon\in\mathbb{R}$, $T_{N,\varepsilon}\defeq \left\{
    \begin{array}{lcl}\RZ&\to&\RZ\\
    x &\mapsto &N x+\varepsilon \sin(2\pi x) \text{ modulo } 1\end{array}\right.,$
    \item For all $\lambda\in\mathbb{R}$ and $\omega\in\RZ$, $\mu_{\lambda,\omega,.}\defeq\left\{
    \begin{array}{lcl}\RZ&\to&\mathcal{P}(\mathbb{N})\\
    x &\mapsto &\text{Pois}\left(e^{\lambda+\cos{(2\pi (x+\omega))}}\right)\end{array}
\right. $.\end{itemize}
Then, for all $x\in\RZ$ and $s\in[0,1]$, $\varphi_{\lambda,\omega}(x,s)=\exp{ \left(e^{\lambda+\cos{(2\pi (x+\omega))}}(s-1)\right) }$.
\end{example}

When $N=2$, $\varepsilon=0$, and $\omega=\frac{1}{2}$, we recover \cite[Example~1.2.2]{morand2024galtonwatsonprocessesdynamicalenvironments}.

 For all $N\geq 2$ and $\lambda,\omega,\varepsilon\in\mathbb{R}$ such that $|\varepsilon|<\frac{N-1}{2\pi}$ (so that $T_{
 N,\varepsilon}$ is expanding), Example~\ref{ex1} verifies Hypothesis~$H\ref{hyp4}(k)$ for all $k\in\mathbb{N}$, except perhaps Hypothesis~$H\ref{hyp4}(k)\ref{US}$ (uniformly supercritical case). Moreover, these examples satisfy the conditions that the transformation is increasing and sends $0$ to $0$, as assumed in Remark~\ref{rem1}.

 Let $N\geq 2$, $\omega\in\RZ$, and $\lambda,\varepsilon\in\mathbb{R}$ be such that $|\varepsilon|<\frac{N-1}{2\pi}$. To determine if we are in the uniformly supercritical case, we need to check if \begin{align}\label{cond}
     \uo{\nu\in\mathcal{P}_{T_{N,\varepsilon}}(\rz)}{}{\inf}  \mathbb{E}_\nu[\log m_{\lambda,\omega}(\cdot)]>0.
 \end{align}
 For all $x\in\RZ$, \begin{align*}
     \log m_{\lambda,\omega}(x)=\lambda+\cos(2\pi(x+\omega)).
 \end{align*}
 Thus, Inequality~\eqref{cond} can be expressed as:\begin{align}\label{thie}
     \lambda> -\uo{\nu\in\mathcal{P}_{T_{N,\varepsilon}}(\rz)}{}{\inf}  \mathbb{E}_\nu[\cos(2\pi(\cdot+\omega))].
 \end{align}
 In particular, \begin{itemize}
     \item if $\lambda>1$, we are in the uniformly supercritical case, and thus, Hypothesis~$H\ref{hyp4}(k)$ is satisfied for all $k\in\mathbb{N}$.
     \item if $N=2$ and $\varepsilon=0$, 
 Inequality~\eqref{thie} was studied in \cite{MR1785392}. The maximum of $\mathbb{E}_\nu[\cos 2\pi (\cdot+\omega)]$ is reached on a Sturm measure and is periodic for all $\omega\in\RZ$ outside a certain set of Hausdorff dimension zero. \cite[Annexe]{MR1785392} gives the corresponding maximising Sturm measure for some values of $\omega\in\RZ$.
 \end{itemize}

\subsection{Bounds on the probability of extinction}

In this subsection, we present the bounds obtained on the probability of extinction $q$ using arithmetic intervals. In addition, we explain why the control of $q$ by Fourier series is not appropriate in this context.

\subsubsection{Lower bound on the probability of extinction}

\begin{figure}[H]
    \centering
    \begin{subcaptionbox}{$\lambda=1,05$ and $\omega=0,5$\label{fig:image1}}[0.49\textwidth]
       {\includegraphics[width=\linewidth]{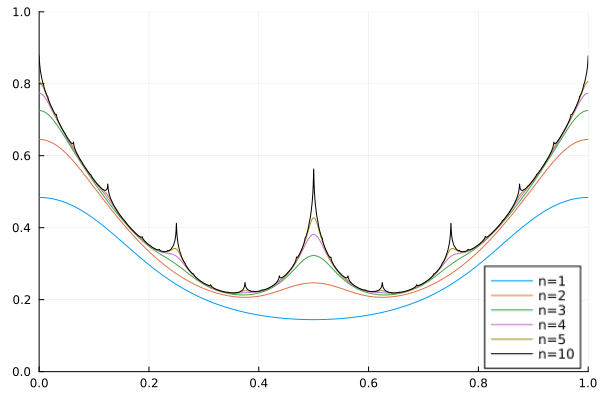}}
    \end{subcaptionbox}
    \hfill
    \begin{subcaptionbox}{$\lambda=0,55$ and $\omega=0$\label{fig:image2}}[0.49\textwidth]
        {\includegraphics[width=\linewidth]{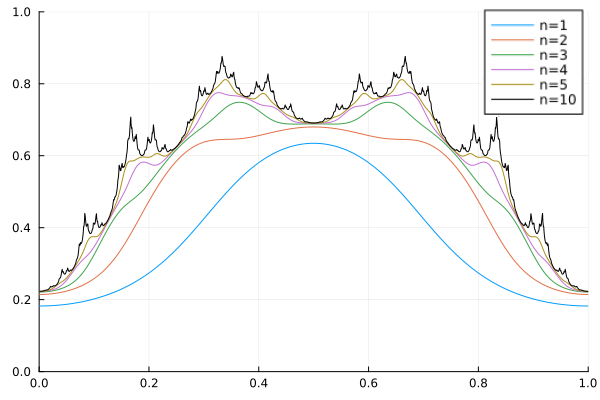}}
    \end{subcaptionbox}
    \caption{Plot of rigorous lower bounds on the function $x\in\RZ\mapsto\varphi^{(n)}_{\lambda,\omega}(x,0)$ in the case of Example~\ref{ex1} with $N=2$ and $\varepsilon=0$ for different values of $n$.}\label{fig1}
\end{figure}

As described in Sub-subsection~\ref{eff_low}, to obtain a lower bound on $q$, we use the fact that, by Definition~\ref{def_q}, for all $x\in\RZ$, $q(x)=\uo{n\to\infty}{}{\lim}\nearrow\varphi^{(n)}(x,0)$. Figure~\ref{fig1} shows the sequence of functions $\big(x\mapsto\varphi^{(n)}(x,0)\big)_{n\in\N}$ that converges increasingly to $q$ as $n$ increases.

\subsubsection{Upper bound on the probability of extinction}

Thanks to Equation~\eqref{in_uq}, we obtain an upper bound on $q$. For $0<K<1$ such that $K\geq q(x)$ for all $x\in\RZ$, the sequence of functions $\big(x\mapsto\varphi^{(n)}(x,K)\big)_{n\in\N}$ converges uniformly to $q$ and is everywhere greater than $q$.
Using the method described in Algorithm~\ref{alg_K}, we computed suitable values of $K$. We obtain $K=0,9386$ in the case of Figure~\ref{fig:image3}, and $K=0,9388$ in the case of Figure~\ref{fig:image4}.

\begin{figure}[H]
    \centering
   \begin{subcaptionbox}{$\lambda=1,05$ and $\omega=0,5$\label{fig:image3}}[0.49\textwidth]
        {\includegraphics[width=\linewidth]{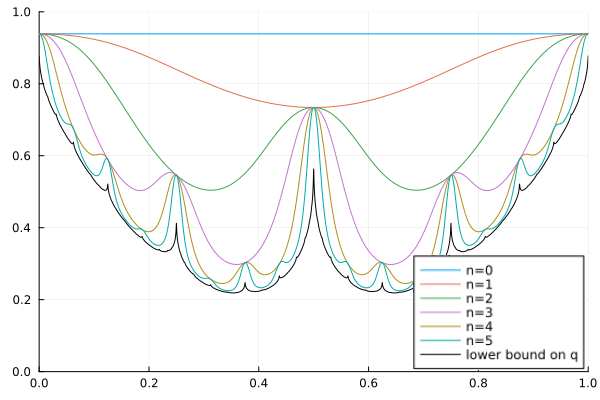}}
    \end{subcaptionbox}
    \hfill
    \begin{subcaptionbox}{$\lambda=0,55$ and $\omega=0$\label{fig:image4}}[0.49\textwidth]
        {\includegraphics[width=\linewidth]{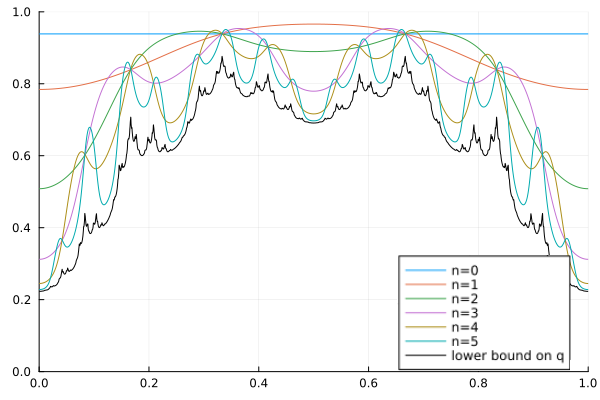}}
    \end{subcaptionbox}
    \caption{Plot of rigorous upper bounds on the function $x\in\RZ\mapsto\varphi^{(n)}_{\lambda,\omega}(x,K)$ in the case of Example~\ref{ex1} with $N=2$ and $\varepsilon=0$ for different values of $n\in\N$. The lower bound on $q$ is obtained by displaying the function $x\in\RZ\mapsto\varphi^{(10)}_{\lambda,\omega}(x,0)$.}
\end{figure}

However, the sequence $\big(x\mapsto\varphi^{(n)}(x,K)\big)_{n\in\N}$ is not necessarily decreasing (see for example Figure~\ref{fig:image4}). In Figure~\ref{fig:image3}, this sequence is decreasing. Since $\lambda=1,05>1$ (and $\omega=0,5$), for all $x\in\RZ$, the expectation of the law $\mu_{x,\lambda,\omega}$ is greater than $1$. Therefore, for all $x\in\RZ$, $\varphi_{\lambda,\omega}(x,K)\leq K$.

In practice, to improve the precision of the upper bound on $q$, we use: $\uo{0\leq i\leq n}{}{\min}\big(\varphi^{(i)}(x,K)\big)$ (instead of $\varphi^{(n)}(x,K)$), which decreases in $n$ to $q$ as described in Sub-subsection~\ref{eff}.

\subsubsection{Bounds on the probability of extinction using Fourier series}

    In the case of Example~\ref{ex1} with $\varepsilon=0$ (i.e. $T:x\mapsto Nx)$, we tried to approximate the probability of extinction using a Fourier series expansion.
    Let: \fonctionv{\widetilde{T}}{\mathcal{C}(\RZ,\RZ)}{\mathcal{C}(\RZ,\RZ)}{f}{f\circ T}
\fonctionv{\widetilde{\varphi}}{\mathcal{C}(\RZ,\RZ)}{\mathcal{C}(\RZ,\RZ)}{f}{(x\mapsto\varphi(x,f(x)))}
and $\widetilde{G}\defeq \widetilde{\varphi}\circ\widetilde{T}$. Then, for all $n\in\mathbb{N}$, $(x\mapsto \varphi^{(n)}(x,0))=\widetilde{G}^n(0)$, and so by Definition~\ref{def_q}, $q(x)=\uo{n\to\infty}{}{\lim}\widetilde{G}^n(0)$. In this case, if for all $x\in\RZ$,\begin{align*}
    f(x)=\sum_{n\in \mathbb{Z}}a_n c_n(x) \text{ with $c_n$ defined for all }n\in\N \text{ and }x\in\RZ \text{ by, } c_n(x)=e^{2i\pi n x}.
\end{align*} 
Then, for all $x\in\RZ$,
\begin{align*}
    \widetilde{T}(f)(x)=\sum_{n\in \mathbb{Z}}\widetilde{a}_n c_n(x) \text{ where } \widetilde{a}_n=\left\{\begin{array}{lcl}  a_{n/N} &\text{if}      &n\in N\mathbb{Z}\\
                                0    &\text{else}  &
\end{array} \right. .
\end{align*}
We can also characterise the action of $\widetilde{\varphi}$ on a function developed as a Fourier series, using the development of $x\mapsto\mu_x(k)$ as a Fourier series for all $k\in\N$. 

\begin{figure}[H]
    \centering
    \begin{minipage}{0.6\textwidth}
        \includegraphics[scale=0.34]{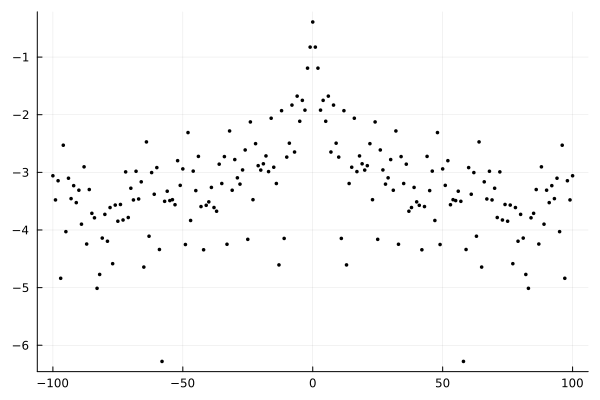}
    \end{minipage}%
    \begin{minipage}{0.4\textwidth}
       \captionof{figure}{Plot of the logarithm of the absolute value of the Fourier coefficients of an approximation of $q_\lambda$ for coefficients between $-100$ and $100$ in the case of Example~\ref{ex1} with $N=2$, $\varepsilon=0$, $\lambda=0.55$, and $\omega=0$.}
       \label{fi}
    \end{minipage}
\end{figure}

After applying $\widetilde{\varphi}$ (which is a nonlinear operator), there are no preferred Fourier frequencies that carry most of the information. This technique therefore loses its advantage, which would have been to keep only the dominant frequencies and control the negligible frequencies.

\subsection{Bounds on the Lyapunov exponents}

In this subsection, we present the bounds obtained on $\frac{|\lambda_F|}{\lambda_u}$. To compute a lower bound for this quantity, we use the upper bounds found on the Lyapunov exponents $\lambda_F$ and $\lambda_u$, and conversely for the upper bound.

\subsubsection{Periodic orbits}

\begin{figure}[H]
    \centering
    \begin{subcaptionbox}{$M\in\{1,2,3,4\}$}[0.49\textwidth]
        {\includegraphics[width=\linewidth]{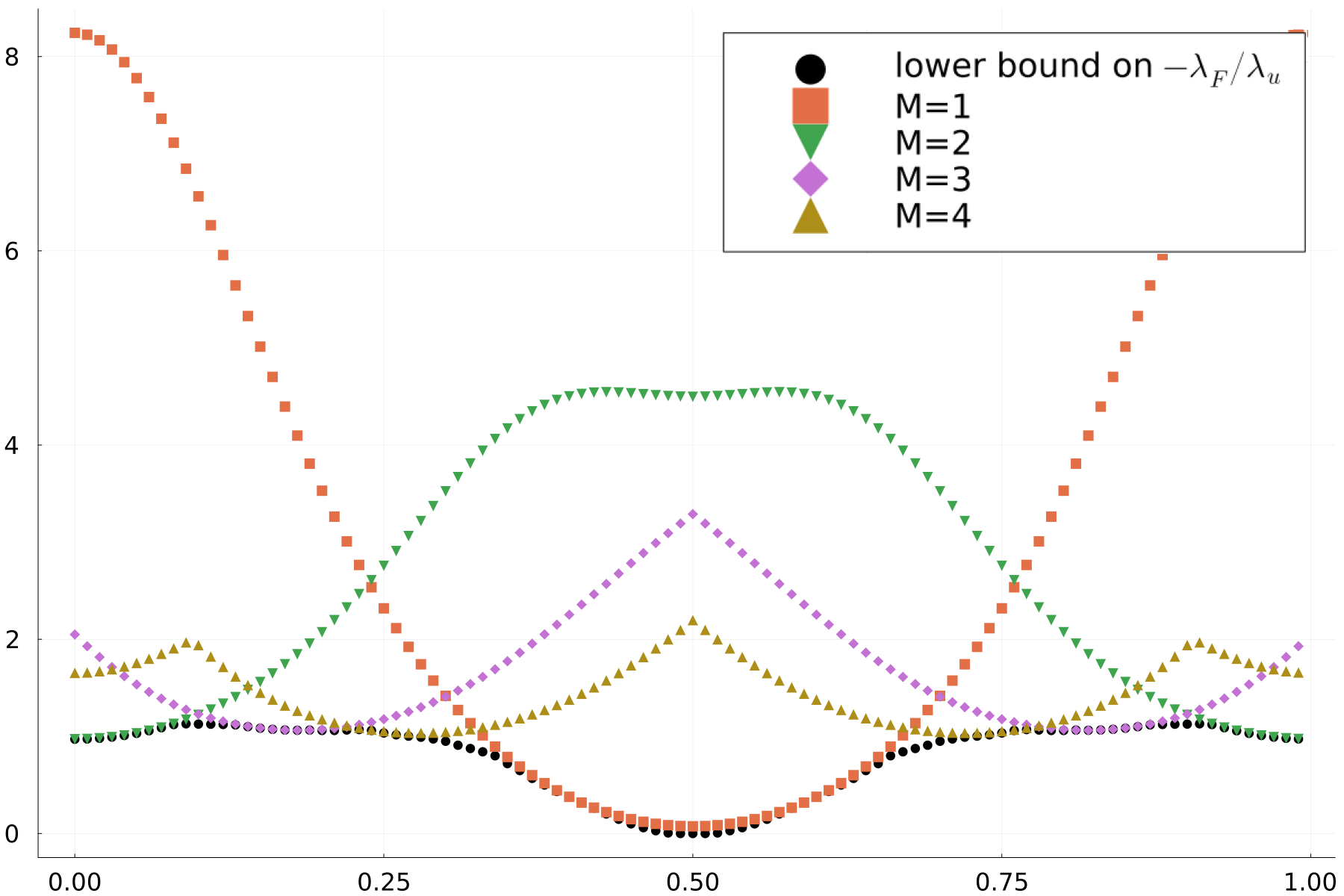}}
   \end{subcaptionbox}
    \hfill
    \begin{subcaptionbox}{$M\in\{5,8,11,14\}$}[0.49\textwidth]
       {\includegraphics[width=\linewidth]{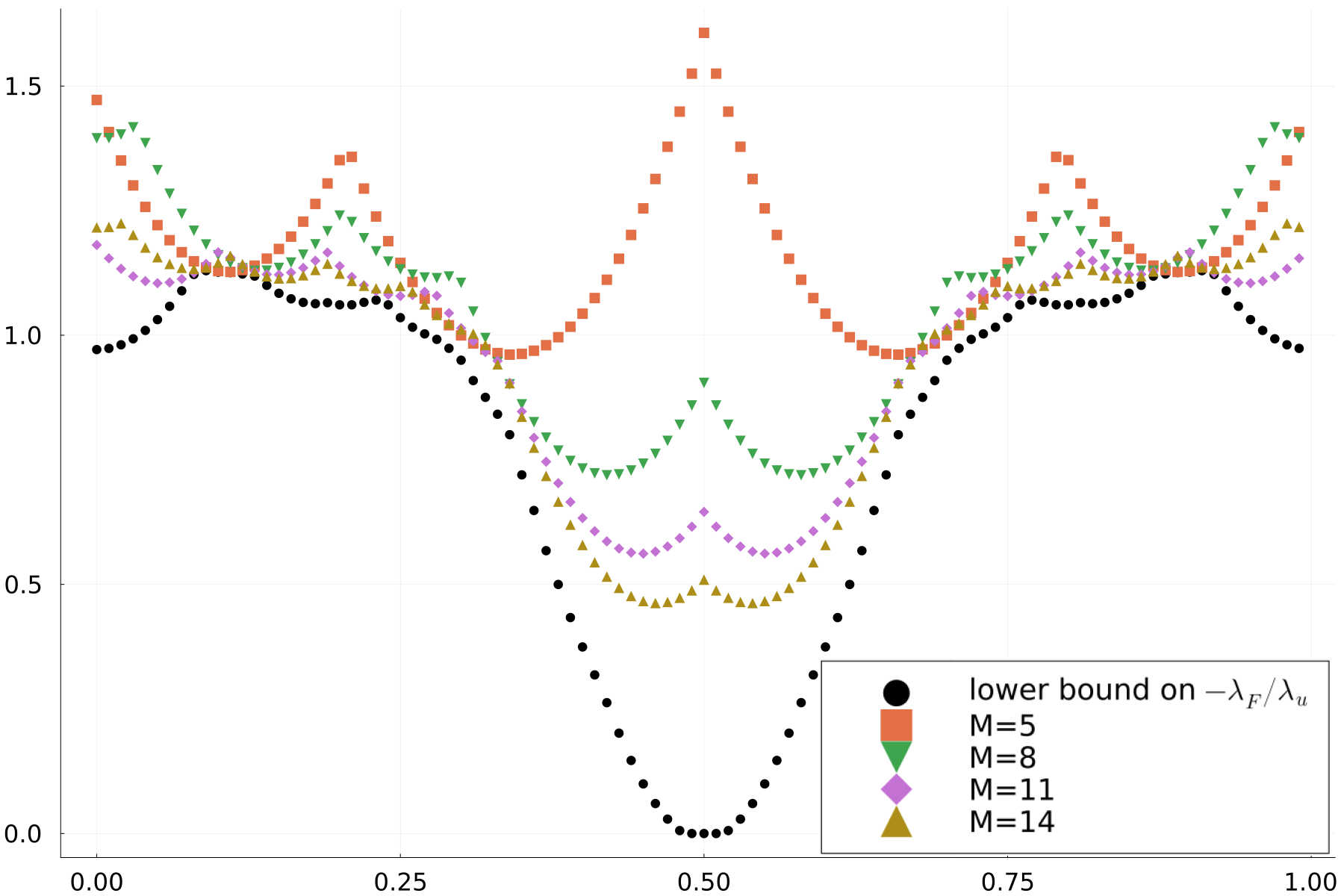}}
    \end{subcaptionbox}
    \caption{Plot of rigorous upper bounds on $\frac{|\lambda_F|}{\lambda_u}$ obtained by considering only orbits of primitive period $M$ as a function of $\omega\in\RZ$ in the case of Example~\ref{ex1} with $N=2$, $\varepsilon=0$, and $\lambda=1.05$. The lower bound on $\frac{|\lambda_F|}{\lambda_u}$ is obtained thanks to Algorithm~\ref{alg_ub}.}\label{fi1}
\end{figure}

Using Proposition~\ref{prop2.1}, we compute a lower bound on the Lyapunov exponents $\lambda_F$ and $\lambda_u$ (and thus an upper bound on $\frac{|\lambda_F|}{\lambda_u}$) by evaluating the functions $F$ and $\log|T'|$ along the periodic orbits. In Figure~\ref{fi1}, we plot an upper bound on $\frac{|\lambda_F|}{\lambda_u}$ obtained by considering only orbits of primitive period $M$ as a function of $\omega\in\RZ$ in the case of Example~\ref{ex1} with $N=2$, $\varepsilon=0$, and $\lambda=1.05$. In the case of Example~\ref{ex1}, the parameter $\lambda=1.05>1$ ensures that, for all $\omega\in\RZ$, the process is uniformly supercritical. As can be seen here, for some values of $\omega$, small periods suffice to obtain a good upper bound, whereas for others, larger orbits must be considered.  To obtain the bound given by Algorithm~\ref{alg_lb}, we must therefore consider the smallest bound obtained for all $M\leq \widetilde{M}$ with $\widetilde{M}\in\N$. Looking at the dependence on the variable $\omega$, the functions appear to be continuous and piecewise smooth. The points at which the function is not smooth correspond to a change in the orbit of primitive period $M$ which minimizes the function $F$.

\subsubsection{Accuracy of the bounds}

The accuracy of the bounds is defined as the difference between the upper bound and the lower bound. The difficulty lies in obtaining good bounds on $\lambda_F$ (compared to $\lambda_u$). The error is expected to come mainly from the upper bound. Indeed, to obtain a lower bound, we use the periodic orbits of the transformation $T$, and all the points in an orbit are approximated with an error smaller than $\varepsilon$. For the upper bound, we consider intervals of size $\varepsilon$, but applying $T$ increases their size since $T$ is expanding

Figure~\ref{fi2} plots the bounds on $\frac{|\lambda_F|}{\lambda_u}$ as a function of $\lambda\in\R$ in the case of Example~\ref{ex1} with $N=2$, $\varepsilon=0$, and $\omega=0$. The uniformly supercritical case occurs when $\lambda>0.5$. For all $x,\omega\in\RZ$, the family of probability measures $(\mu_{\lambda,\omega,x})_{\lambda\in\R}$ is stochastically increasing with respect to $\lambda\in\R$. Therefore, the probability of extinction $q_\lambda$ is decreasing in $\lambda$, so for all $x\in\RZ$, \begin{align}\label{Fex}
F_{\lambda}(x,q_{\lambda}(Tx))=\log\left(e^{\lambda+\cos{(2\pi (x+\omega))}}\exp{\left( e^{\lambda+\cos{(2\pi (x+\omega))}}(q_{\lambda}(Tx)-1) \right)}\right)
\end{align} 
is decreasing in $\lambda$, and thus $\frac{|\lambda_F|}{\lambda_u}$ is increasing with respect to $\lambda$.

\begin{figure}[H]
    \centering
    \begin{subcaptionbox}{$\lambda\in[0.5,1.0]$}[0.49\textwidth]
       {\includegraphics[width=\linewidth]{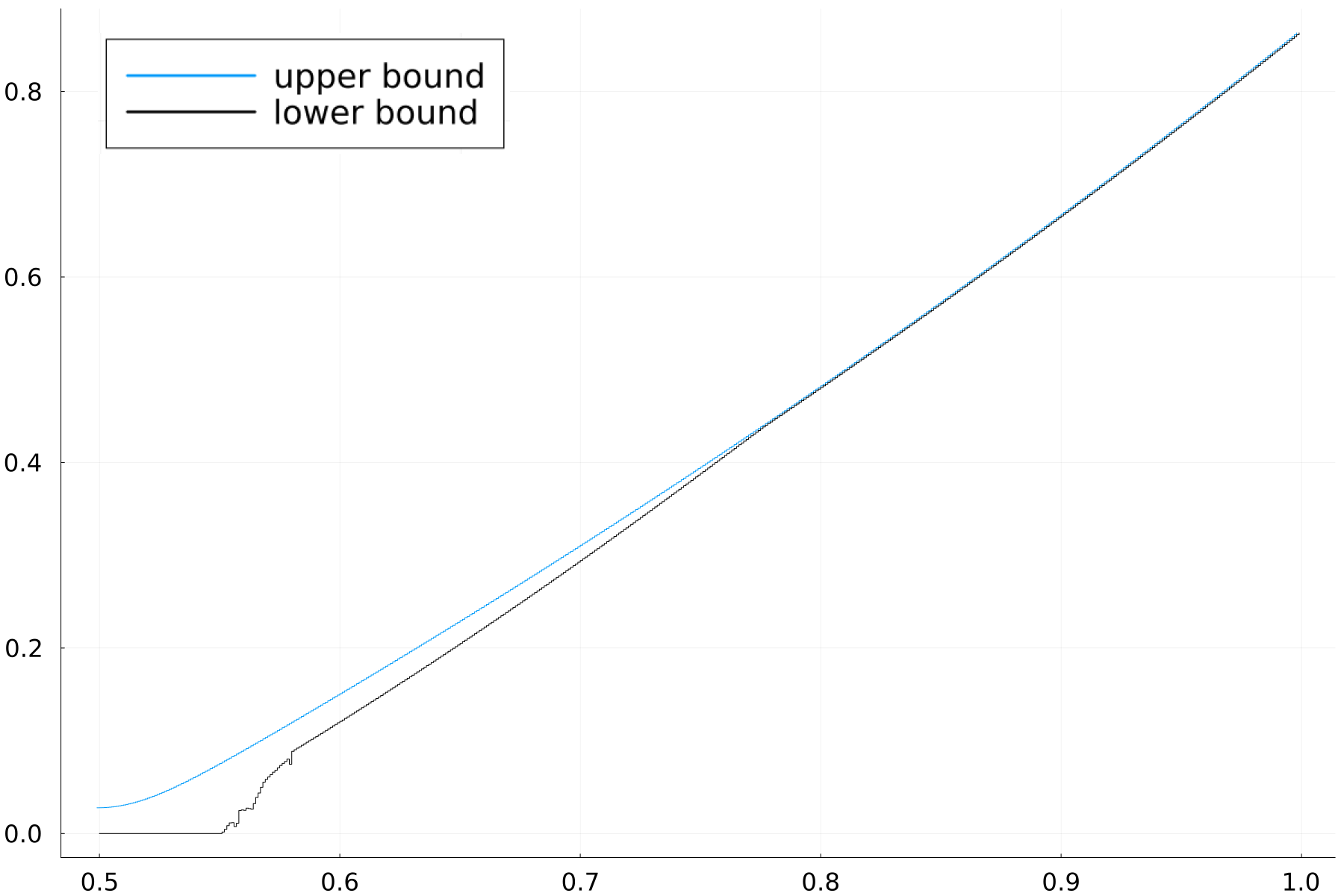}}
    \end{subcaptionbox}
    \hfill
    \begin{subcaptionbox}{$\lambda\in[0.5,5.0]$}[0.49\textwidth]
        {\includegraphics[width=\linewidth]{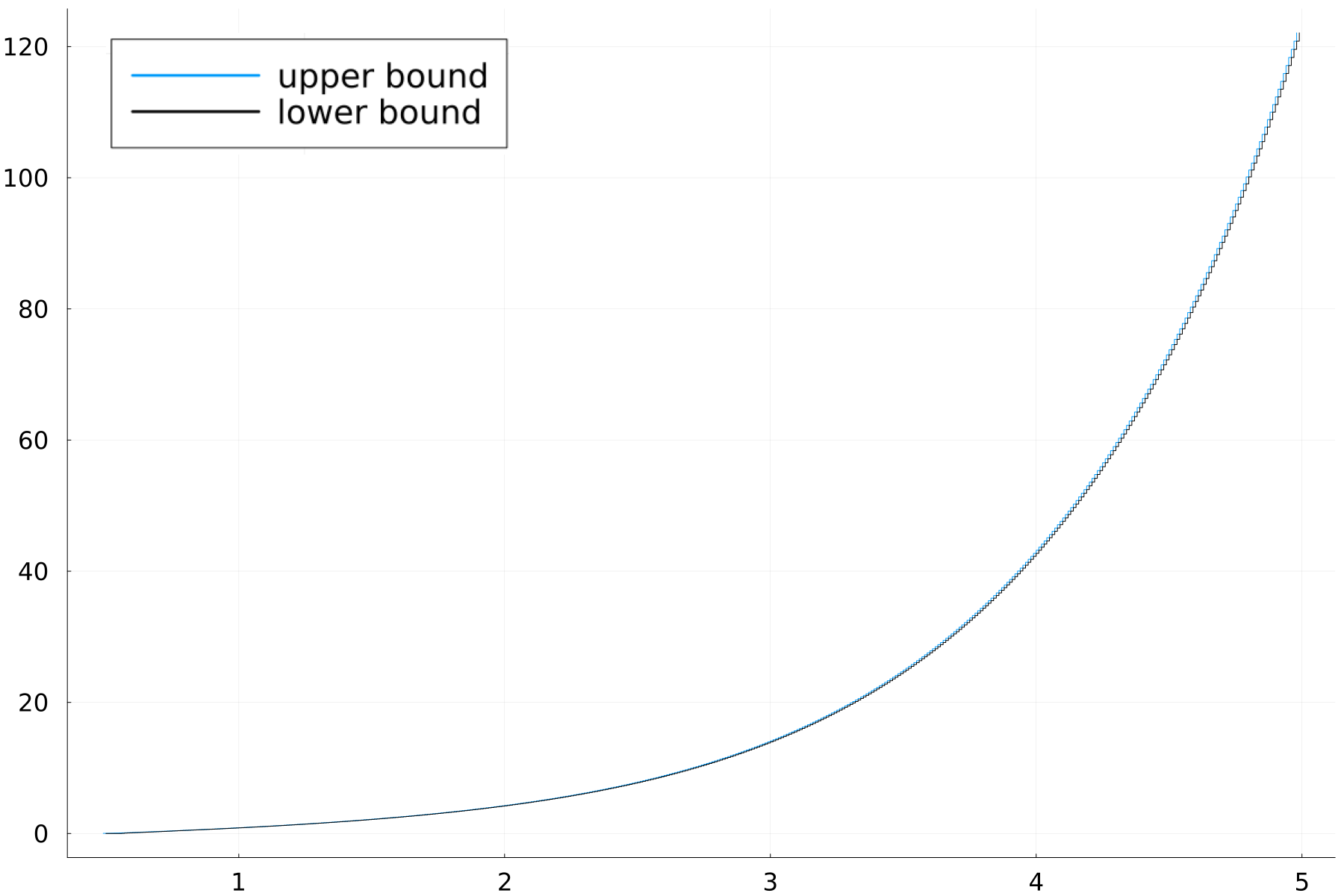}}
    \end{subcaptionbox}
    \caption{Plot of rigorous bounds on $\frac{|\lambda_F|}{\lambda_u}$ as a function of $\lambda\in\R$ in the case of Example~\ref{ex1} with $N=2$, $\varepsilon=0$, and $\omega=0$.}\label{fi2}
\end{figure}

The accuracy improves as $\lambda$ grows. This is because $\lambda_F$ depends on $q$, and the regularity of $q$ increases with $\lambda$. To obtain a good approximation when $q$ is not very regular, the function $q$ must be evaluated on small intervals.

The following table shows how the accuracy of Algorithm~\ref{alg_ub} improves when $n_{\text{iteration}}$ increases in the case of Example~\ref{ex1} with $N=2$, $\varepsilon=0$, $\omega=0$, and $\lambda=0.56$. Algorithm~\ref{alg_lb} gives an upper bound of $0.0886=8.86\times 100$ on $\frac{|\lambda_F|}{\lambda_u}$.
\begin{center}
 \begin{tabular}{|c|c|c|c|c|c|c|c|}
  \hline
  $n_{\text{iteration}}$ & 40  & 60  & 80 & 100 & 120 & 140 & 160\\[0.5ex]
  \hline
  Lower bound on $\frac{|\lambda_F|}{\lambda_u}$ $(\times 100)$ & 0 & 0.54 & 2.50 & 4.24 & 4.72 & 4.78 & 5.11 \\[0.5ex]
  \hline 
  Precision of the bounds on $\frac{|\lambda_F|}{\lambda_u}$ $(\times 100)$ & 8.86 & 8.32 & 6.36 & 4.62 & 4.14 & 4.08 & 3.75 \\[0.5ex] 
  \hline
\end{tabular}   
\end{center}
\begin{remark}
    For example, when $n_{\text{iteration}}=100$, the intervals considered in Algorithm~\ref{alg_ub} can be of size $2^{-100}$. We then need to ensure that the programme can handle such small intervals.
\end{remark}

 In the case of Example~\ref{ex1}, when $\lambda$ is large, the probability of extinction $q_\lambda$ converges exponentially fast to $0$. Thus, by using Equation~\eqref{Fex}, for all $x\in\RZ$,\begin{align*}
    F_{\lambda}(x,q_{\lambda}(Tx))\sim_{\lambda\to+\infty}-e^{\lambda}.
\end{align*}
Moreover, this equivalent is uniform in the variable $x\in\RZ$. Thus, by Proposition~\ref{proplf} (and as $\lambda_u$ does not depend on $\lambda$), \begin{align}\label{equi}
    \frac{\log(|\lambda_F|/\lambda_u)}{\lambda}\uo{\lambda\to+\infty}{}{\longrightarrow}1.
\end{align}
\begin{figure}[H]
    \centering
    \begin{minipage}{0.6\textwidth}
        \includegraphics[scale=0.14]{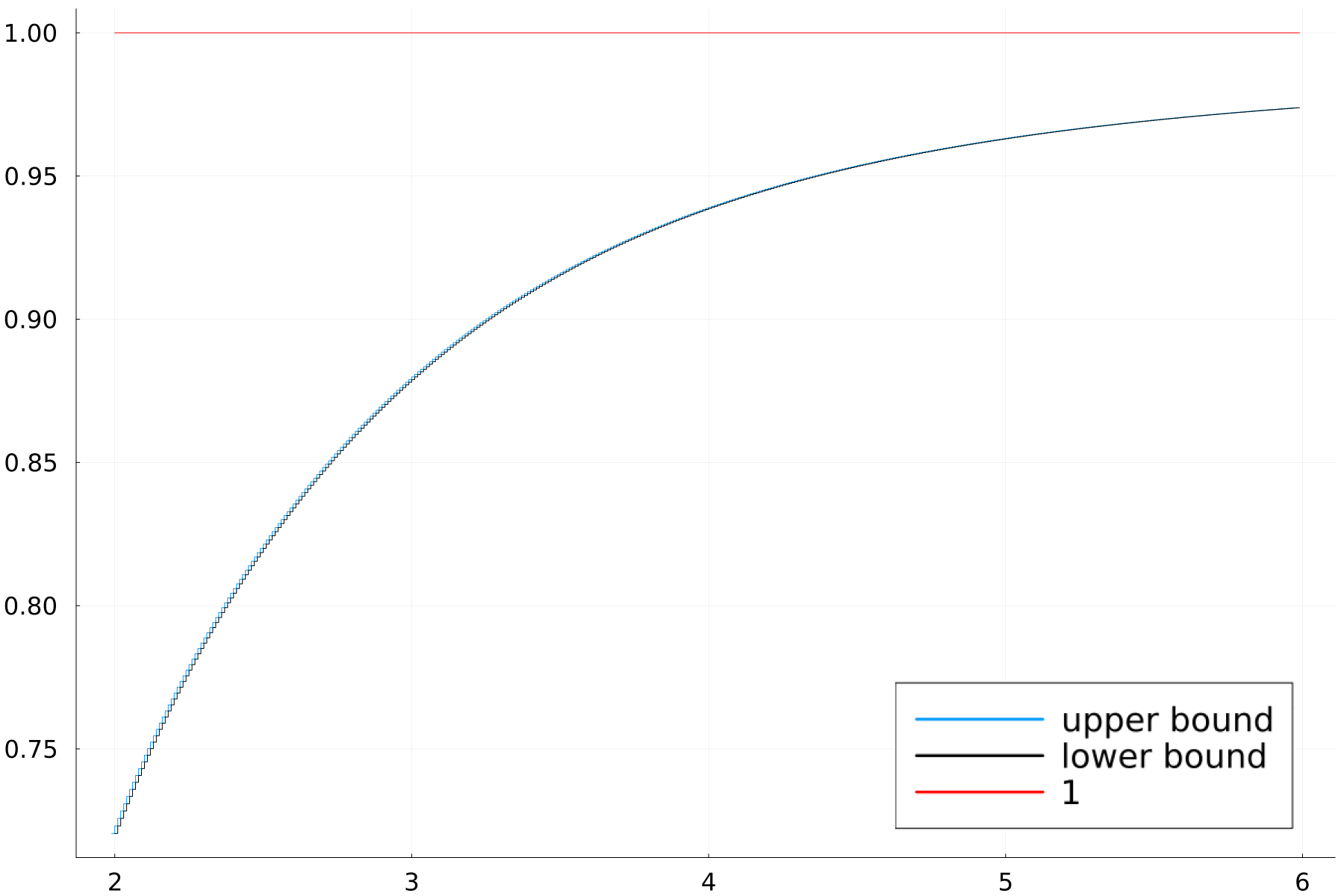}
    \end{minipage}%
    \begin{minipage}{0.4\textwidth}
       \captionof{figure}{Plot of rigorous bounds on $\frac{\log(|\lambda_F|/\lambda_u)}{\lambda}$ as a function of $\lambda\in[2,6]$ in the case of Example~\ref{ex1} with $N=2$, $\varepsilon=0$, and $\omega=0$.}
       \label{fi5}
    \end{minipage}
\end{figure}
In Figure~\ref{fi5}, Equivalence~\eqref{equi} is observed numerically in the case of Example~\ref{ex1} with $N=2$, $\varepsilon=0$, and $\omega=0$.

\begin{figure}[H]
    \centering
   \begin{subcaptionbox}{$N=3$, $\varepsilon=-0.1$, and $\omega=0.3$}[0.49\textwidth]
        {\includegraphics[width=\linewidth]{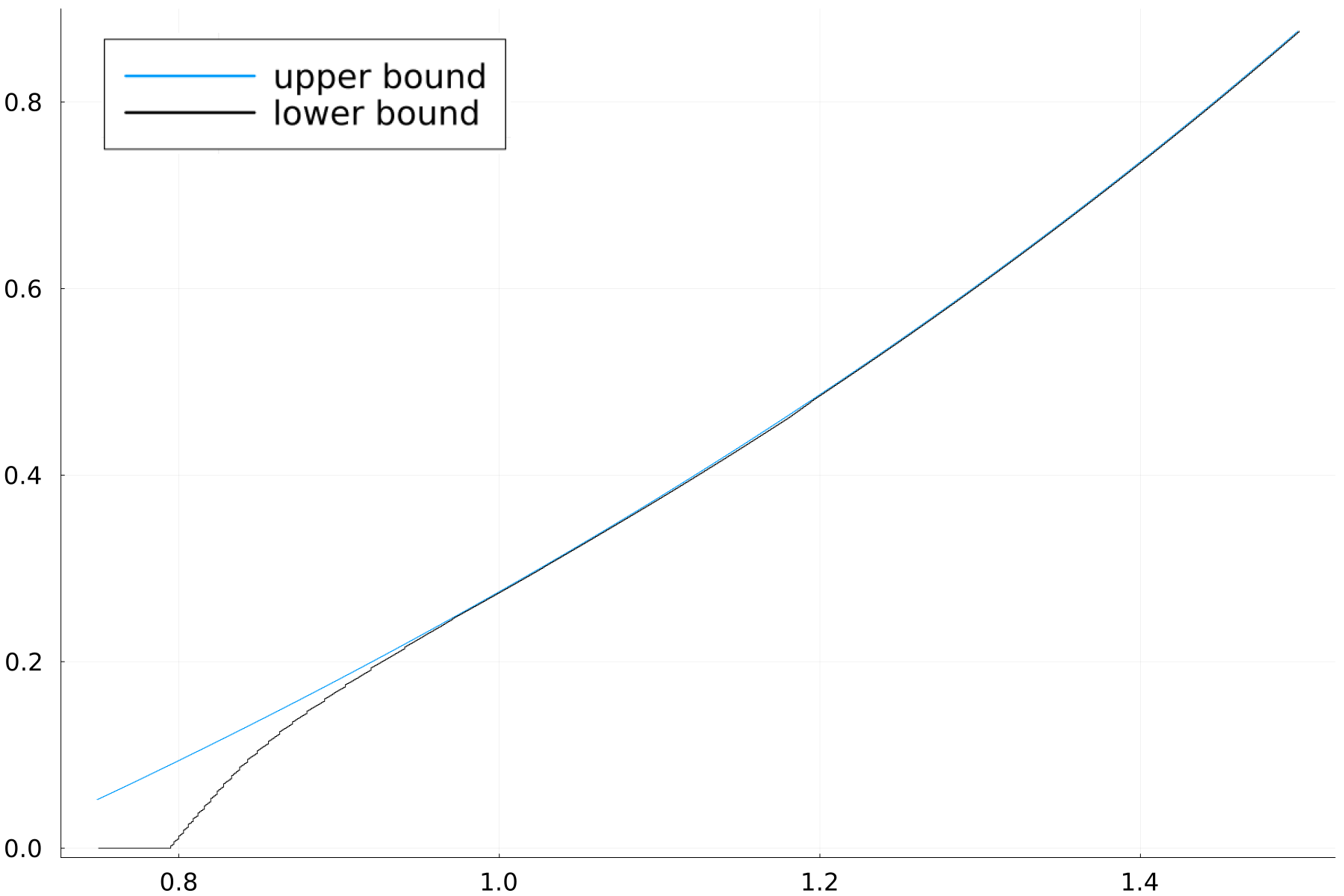}}
    \end{subcaptionbox}
    \hfill
    \begin{subcaptionbox}{$N=4$, $\varepsilon=0.2$, and $\omega=0.6$}[0.49\textwidth]
        {\includegraphics[width=\linewidth]{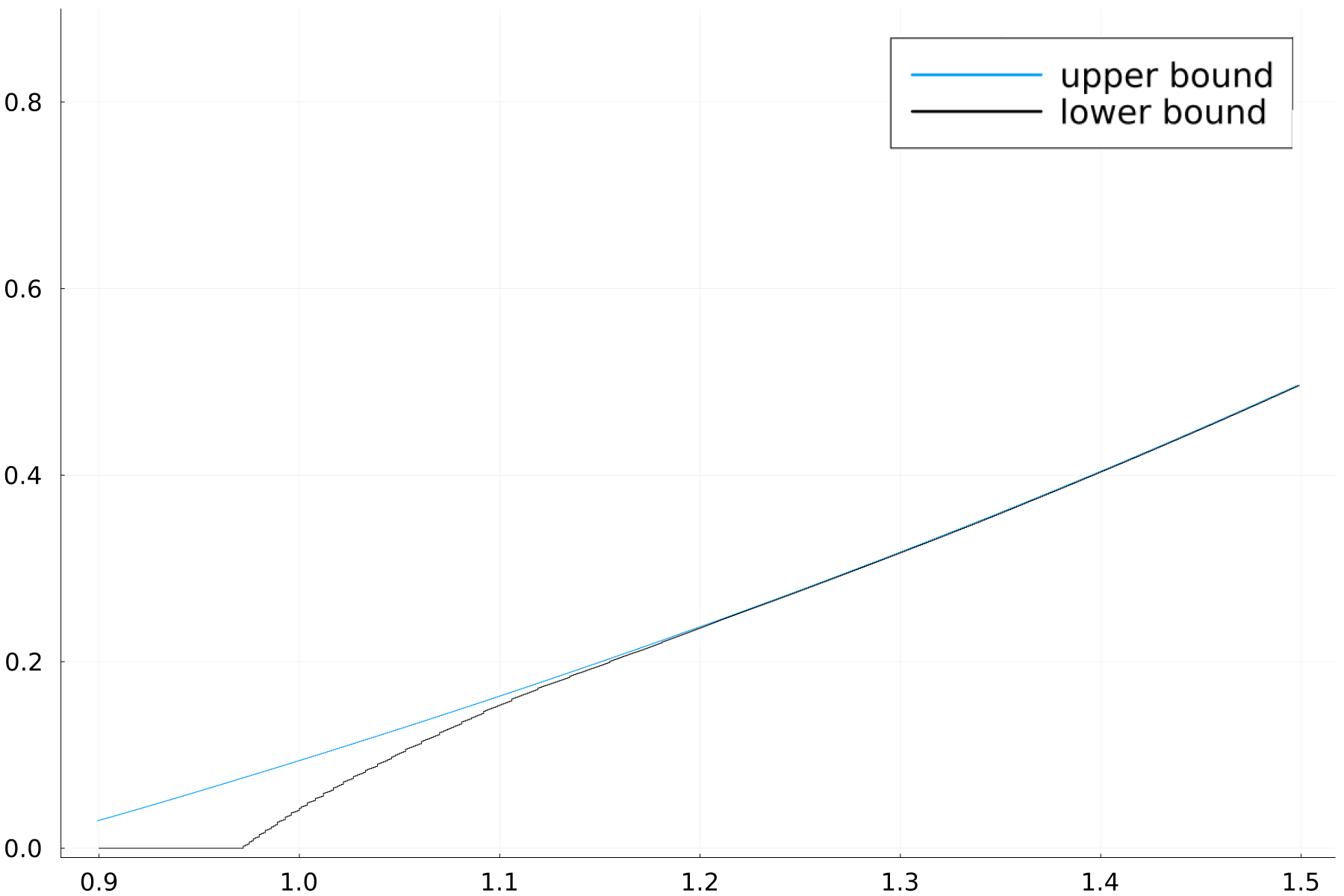}}
    \end{subcaptionbox}
    \caption{Plot of rigorous bounds on $\frac{|\lambda_F|}{\lambda_u}$ as a function of $\lambda\in\R$ in the case of Example~\ref{ex1}.}\label{fi3}
\end{figure}

In Figure~\ref{fi3}, we consider different laws of reproduction $\mu$ and transformations $T$ in the case of Example~\ref{ex1}. It appears that the uniformly supercritical case begins at different values of $\lambda$ (but there is no need to compute it explicitly thanks to Remark~\ref{rem2}). As in Figure~\ref{fi2}, we again observe an improvement in accuracy as $\lambda$ increases for the same reasons.

\subsubsection{Limitations and computational constraints}

To compute a lower bound on the Lyapunov exponents, we search for periodic orbits of the transformation $T$ (see Algorithm~\ref{alg_ub}). There are $d^k-1$ periodic points of period $k\in\N^*$ with $d\geq 2$, the topological degree of $T$. The number of periodic points of period $k$ increases exponentially as a function of $k$ (and the same is true for the number of primitive orbits of period $k$), so we cannot consider very large periodic orbits.

To compute an upper bound on the Lyapunov exponents, we evaluate the functions $F$ and $\log |T'|$ on increasingly smaller intervals. In Algorithm~\ref{alg_ub}, the intervals can be, at worst, of size  $2^{-n_{\text{iteration}}}$. When $n_{\text{iteration}}$ is large, it is necessary to ensure that the program can manage such small intervals. This negatively affects its speed. Furthermore, this level of precision is necessary to obtain accurate bounds, since the function $q$ may have low regularity ($\alpha$-Hölder continuous for an unknown $\alpha\in(0,1]$). 

\appendix
\section{Regularity of the invariant function}\label{A}

In this appendix, we prove Theorem~\ref{thm2.2}, which gives the differentiability class of $q$ and the Hölder regularity of its derivatives as a function of the ratio $\frac{|\lambda_F|}{\lambda_u}$.

\begin{remark}
     In this section, $T^{(n)}$ (respectively $q^{(n)}$) denotes the $n$-th derivative of $T$ (respectively of $q$) while $s\mapsto \varphi^{(n)}(x,s)$ is the probability generating function of the distribution of $Z_n(x)$ (see Definition~\ref{def2}).
\end{remark}

\subsection{Differentiability class of the invariant function}

In Lemma~\ref{lem2.3}, we express via induction the partial derivatives of $\varphi^{(n)}$ with respect to the variable $x$. It is expressed as a sum of a principal term controlled by induction and a polynomial in variables that we control. Polynomials that appear do not need to be explicit, but they can be computed by induction.
\begin{lemma}\label{lem2.3}
    Let $k\in\N^*$. Assume $(H\ref{hyp4}(k))$. Then, there exists a polynomial $P_k$ in the variables $T^{(z)}(x)$ for ${ z\leq k}$, $\dxsph{x}{\phn{Tx}{0}{n-1}}{z_1}{z_2}$ for $z_1+z_2\leq k$, and $\dxphn{Tx}{0}{n-1}{z}$ for $z<k$ such that for all $x\in\RZ$ and $n\geq 1$, \begin{align*}
        \dxphn{x}{0}{n}{k}=T'(x)^k\dxphn{Tx}{0}{n-1}{k}\dsph{x}{\phn{Tx}{0}{n-1}}+H_{k,n}(x),
    \end{align*} 
    where
    \begin{align*}
        H_{k,n}(x)= P_k[ (T^{(z)}(x))_{ z\leq k}, (\dxsph{x}{\phn{Tx}{0}{n-1}}{z_1}{z_2})_{z_1+z_2\leq k}, (\dxphn{Tx}{0}{n-1}{z})_{z<k}]. 
    \end{align*}
\end{lemma}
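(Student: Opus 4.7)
The approach is induction on $k \geq 1$, starting from the fundamental semigroup identity $\phn{x}{0}{n} = \ph{x}{\phn{Tx}{0}{n-1}}$ and repeatedly differentiating in $x$. At every order, the desired decomposition corresponds to isolating the single contribution in which all $k$ derivatives fall on the inner composition $x \mapsto \phn{Tx}{0}{n-1}$; everything else is a product of factors whose $x$--order, $(x,s)$--order, or inner derivative order is strictly smaller, and thus lies in the admissible variable list of $P_k$.

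For the base case $k=1$, the chain rule gives
\begin{align*}
\dxphn{x}{0}{n}{1} = T'(x)\, \dxphn{Tx}{0}{n-1}{1}\, \dsph{x}{\phn{Tx}{0}{n-1}} + \dxsph{x}{\phn{Tx}{0}{n-1}}{1}{0},
\end{align*}
so $H_{1,n}(x) = \dxsph{x}{\phn{Tx}{0}{n-1}}{1}{0}$ is a single variable of total order $z_1+z_2 = 1$, which is admissible.

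For the inductive step, I would differentiate the expression of order $k$ once in $x$ and keep track of where an order $k+1$ in $\dxphn{T\cdot}{0}{n-1}{\cdot}$ can appear. Applying the product rule to the principal term $T'(x)^k\, \dxphn{Tx}{0}{n-1}{k}\, \dsph{x}{\phn{Tx}{0}{n-1}}$ yields exactly one summand in which the derivative hits $\dxphn{Tx}{0}{n-1}{k}$, producing $T'(x)^{k+1}\, \dxphn{Tx}{0}{n-1}{k+1}\, \dsph{x}{\phn{Tx}{0}{n-1}}$: this is the new principal term at order $k+1$. The remaining two product-rule terms contain only $T^{(z)}(x)$ with $z \leq 2$, mixed derivatives $\dxsph{x}{\phn{Tx}{0}{n-1}}{z_1}{z_2}$ with $z_1+z_2 \leq 2$, and $\dxphn{Tx}{0}{n-1}{z}$ with $z \leq k$, so they all belong to $H_{k+1,n}$. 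Then one checks that $H_{k,n}'(x)$ is still a polynomial in the admissible variables: differentiating raises a variable $T^{(z)}$ to $T^{(z+1)}$ with $z+1 \leq k+1$; raises the total $(x,s)$--order of any $\dxsph{x}{\phn{Tx}{0}{n-1}}{z_1}{z_2}$ by exactly one, still $\leq k+1$ (the extra $s$--chain factor $T'(x)\,\dxphn{Tx}{0}{n-1}{1}$ contributing a factor that is itself admissible); and raises $\dxphn{Tx}{0}{n-1}{z}$ with $z<k$ to order $z+1 \leq k < k+1$.

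The main obstacle, as these computations illustrate, is purely combinatorial bookkeeping: one must verify that the \emph{only} occurrence of a derivative of order $k+1$ of the inner function $y \mapsto \phn{y}{0}{n-1}$ is the one isolated in the new principal term. The key observations that make this clean are (i) the chain-rule bump when differentiating $\dxphn{Tx}{0}{n-1}{z}$ adds exactly one to the order, (ii) the hypothesis $z < k$ in $H_{k,n}$ ensures $z+1 \leq k < k+1$ after differentiation, and (iii) differentiating a factor $\dxsph{x}{\phn{Tx}{0}{n-1}}{z_1}{z_2}$ only produces further mixed derivatives of $\varphi$ together with an already admissible factor $T'(x)\,\dxphn{Tx}{0}{n-1}{1}$. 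Assembling these observations yields a polynomial $P_{k+1}$ (constructed explicitly by induction from $P_k$ and a few fixed product--and--chain--rule moves), completing the induction. The regularity hypotheses in (H\ref{hyp4}$(k)$) guarantee that each symbol manipulated above is a legitimate continuous function on $\RZ$, so the identity holds pointwise.
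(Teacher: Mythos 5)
Your proposal is correct and follows essentially the same route as the paper: both establish the base case via the chain rule on $\varphi^{(n)}(x,0)=\varphi(x,\varphi^{(n-1)}(Tx,0))$, then perform the inductive step by differentiating the previous-order decomposition and checking that (a) the only appearance of the top-order inner derivative comes from the product-rule term hitting $\partial_x^{k}\varphi^{(n-1)}(Tx,0)$ in the principal part, and (b) differentiating each admissible variable yields only admissible variables. The paper phrases this as explicit closure relations (its equations for $\partial_x$ of $\partial_x^{z_1}\partial_s^{z_2}\varphi$ and of $\partial_x^{z}\varphi^{(n-1)}(Tx,0)$) and assembles $P_k$ from two auxiliary polynomials, but the bookkeeping is exactly what you describe.
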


\begin{proof}
    We prove the result by induction on $k\in\N^*$. 
    For $k=1$, let $x\in\RZ$ and $n\in\N^*$.
    \begin{align*}
        \partial_x\big(\phn{x}{0}{n}\big)&=\partial_x\big(\ph{x}{\phn{Tx}{0}{n-1}}\big)\\
        &=T'(x)\dxphn{Tx}{0}{n-1}{}\dsph{x}{\phn{Tx}{0}{n-1}}+\dxph{x}{\phn{Tx}{0}{n-1}}{}\\
        &=T'(x)\dxphn{Tx}{0}{n-1}{}\dsph{x}{\phn{Tx}{0}{n-1}}+P_1[\dxph{x}{\phn{Tx}{0}{n-1}}{}]
    \end{align*}
    with $P_1[X]=X$.

    We suppose that the result is true at rank $k-1\in\N^*$. By the hypothesis of induction, for all $x\in\RZ$ and $n\geq 1$, \begin{align}\label{d3}
        &\dxphn{x}{0}{n}{k-1}=T'(x)^{k-1}\dxphn{Tx}{0}{n-1}{k-1}\dsph{x}{\phn{Tx}{0}{n-1}} \\&\hspace{0.5cm}+P_{k-1}[ (T^{(z)}(x))_{ z\leq k-1}, (\dxsph{x}{\phn{Tx}{0}{n-1}}{z_1}{z_2})_{z_1+z_2\leq k-1}, (\dxphn{Tx}{0}{n-1}{z})_{z<k-1}].\nonumber
    \end{align}
    Moreover, for all $z_1+z_2\leq k-1$, 
    \begin{align}\label{d1}
       &\partial_x\big(\dxsph{x}{\phn{Tx}{0}{n-1}}{z_1}{z_2}\big)\\&\hspace{0.5cm}=\dxsph{x}{\phn{Tx}{0}{n-1}}{z_1+1}{z_2}+T'(x)\dxphn{Tx}{0}{n-1}{}\dxsph{x}{\phn{Tx}{0}{n-1}}{z_1}{z_2+1}\nonumber,
    \end{align}
    and for all $z\leq k-1$, 
    \begin{align}\label{d2}
        \partial_x\big(\dxphn{Tx}{0}{n-1}{z}\big)=T'(x)\dxphn{Tx}{0}{n-1}{z+1}.
    \end{align}
    So, by Equations~\eqref{d1} and \eqref{d2}, there exists a polynomial $Q_k$ such that
    \begin{align}
        \partial_x\big(&T'(x)^{k-1}\dxphn{Tx}{0}{n-1}{k-1}\dsph{x}{\phn{Tx}{0}{n-1}}\big)\nonumber\\
        &=T'(x)^k\dxphn{Tx}{0}{n-1}{k}\dsph{x}{\phn{Tx}{0}{n-1}}\label{d4}
        \\&\hspace{1cm}+Q_k[(T^{(z)}(x))_{ z\leq k}, (\dxsph{x}{\phn{Tx}{0}{n-1}}{z_1}{z_2})_{z_1+z_2\leq k}, (\dxphn{Tx}{0}{n-1}{z})_{z<k}]\nonumber.
    \end{align}
Moreover, by Equations~\eqref{d1} and \eqref{d2}, there exists a polynomial $\widetilde{Q}_k$ such that 
\begin{align}\label{d5}
    &\partial_x\big(P_{k-1}[ (T^{(z)}(x))_{ z\leq k-1}, (\dxsph{x}{\phn{Tx}{0}{n-1}}{z_1}{z_2})_{z_1+z_2\leq k-1}, (\dxphn{Tx}{0}{n-1}{z})_{z<k-1}]\big)
    \nonumber\\&=\widetilde{Q}_k[(T^{(z)}(x))_{ z\leq k}, (\dxsph{x}{\phn{Tx}{0}{n-1}}{z_1}{z_2})_{z_1+z_2\leq k}, (\dxphn{Tx}{0}{n-1}{z})_{z<k}].
\end{align}
Therefore, by Equations~\eqref{d3}, \eqref{d4}, and \eqref{d5}, \begin{align*}
     \dxphn{x}{0}{n}{k}&=T'(x)^k\dxphn{Tx}{0}{n-1}{k}\dsph{x}{\phn{Tx}{0}{n-1}}\\&+P_k[(T^{(z)}(x))_{ z\leq k}, (\dxsph{x}{\phn{Tx}{0}{n-1}}{z_1}{z_2})_{z_1+z_2\leq k}, (\dxphn{Tx}{0}{n-1}{z})_{z<k}],
\end{align*}
where $P_k=Q_k+\widetilde{Q}_k$ is a polynomial.
\end{proof}

By induction, under the hypothesis of the preceding lemma, for all $x\in\RZ$ and $n\geq 1$,
\begin{align}
    \dxphn{x}{0}{n}{k}&=\sum_{\ell=0}^{n-1}\prod_{j=0}^{\ell-1}\bigg(T'(T^jx)^k\dsph{T^jx}{\phn{T^{j+1}x}{0}{n-1-j}}\bigg)H_{k,n-\ell}(T^\ell x)\nonumber\\
    &=\sum_{\ell=0}^{n-1}(T^\ell)'(x)^k\dsphn{x}{\phn{T^{\ell}x}{0}{n-\ell}}{\ell}H_{k,n-\ell}(T^\ell x).
\end{align}

Proposition~\ref{prop_dq} gives the differentiability class of $q$ using the expression for the partial derivative of $\varphi$ with respect to $x$ obtained in Lemma~\ref{lem2.3}.

\begin{prop}\label{prop_dq}
    Let $k\in\N^*$. Assume $(H\ref{hyp4}(k))$, and that $k<\frac{|\lambda_F|}{\lambda_u}$. Then $q$ is $\mathcal{C}^{k}$, and for all $x\in\RZ$, \begin{align*}
        q^{(k)}(x)=\sum_{\ell=0}^{+\infty}(T^\ell)'(x)^k\dsphn{ x}{q(T^{\ell}x)}{\ell}H_{k}(T^\ell x),
    \end{align*}
    where\begin{align*}
        H_k(x)= P_k[ (T^{(z)}(x))_{ z\leq k}, (\dxsph{x}{q(Tx)}{z_1}{z_2})_{z_1+z_2\leq k}, (q^{(z)}(Tx))_{z<k}].
    \end{align*}Moreover, $H_{k,n}$ converges uniformly to $H_k$ and $x\mapsto\dxphn{x}{0}{n}{k}$ converges uniformly to $q^{(k)}$ as $n\to+\infty$.
\end{prop}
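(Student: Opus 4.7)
The proof proceeds by induction on $k \in \mathbb{N}^*$. The base case $k = 0$ (continuity of $q$, positivity, uniform convergence of $\varphi^{(n)}(\cdot,0)$ to $q$, and the uniform bound $q \leq K < 1$) is Theorem~\ref{thm2.6} combined with \cite[Corollary~4.1.2]{morand2024galtonwatsonprocessesdynamicalenvironments} and Lemma~\ref{lem2.1}. Fix $k \geq 1$ and assume the statement for all orders strictly below $k$. The displayed formula written immediately after Lemma~\ref{lem2.3} expresses
\begin{align*}
\dxphn{x}{0}{n}{k} = \sum_{\ell=0}^{n-1}(T^\ell)'(x)^k\,\dsphn{x}{\phn{T^{\ell}x}{0}{n-\ell}}{\ell}\,H_{k,n-\ell}(T^\ell x)
\end{align*}
as a finite partial sum. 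The task therefore reduces to proving that this sum converges uniformly in $x$ as $n \to \infty$ to the claimed infinite series. The classical theorem on term-by-term differentiation, combined with the induction hypothesis on uniform convergence of all lower-order derivatives, then yields that $q$ is $\mathcal{C}^{k}$ with $q^{(k)}$ equal to the limit.

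The key estimate comes from the Lyapunov exponents. The assumption $k < |\lambda_F|/\lambda_u$ rewrites as $k\lambda_u + \lambda_F < 0$, so pick $\varepsilon > 0$ with $\beta \defeq k(\lambda_u + \varepsilon) + (\lambda_F + \varepsilon) < 0$. By Proposition~\ref{proplf} together with the monotonicity of $s \mapsto \partial_s\varphi^{(\ell)}(x,s)$ and the inequality $\phn{T^\ell x}{0}{n-\ell} \leq q(T^\ell x)$, there exists $C_\varepsilon$ such that
\begin{align*}
\bigl|(T^\ell)'(x)\bigr|^k \cdot \dsphn{x}{\phn{T^{\ell}x}{0}{n-\ell}}{\ell} \leq C_\varepsilon\, e^{\beta \ell}
\end{align*}
uniformly in $n, \ell, x$. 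The induction hypothesis supplies uniform convergence of every derivative $\partial_x^z \varphi^{(m)}(T\cdot,0) \to q^{(z)}(T\cdot)$ for $z < k$ as $m \to \infty$; combined with continuity of the partial derivatives of $\varphi$ on the compact set $\RZ \times [0,K]$ (which avoids the singular slice $s=1$ thanks to Lemma~\ref{lem2.1}), this gives both a uniform bound $\|H_{k,m}\|_\infty \leq C_0$ and uniform convergence $H_{k,m} \to H_k$.

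Combining these two ingredients, each summand is dominated by $C' e^{\beta \ell}$ uniformly in $x$ and $n$. A standard splitting argument — truncating the tail $\ell > L$ via the geometric bound, and using pointwise-in-$\ell$ convergence of each fixed-index term as $n \to \infty$ on $\ell \leq L$ — upgrades this to uniform convergence of the partial sums to the stated infinite series, and the formula for $H_k$ and $q^{(k)}$ follows. The main obstacle is the simultaneous handling of the two indices $\ell$ and $n-\ell$ appearing in the summand $\dsphn{x}{\phn{T^{\ell}x}{0}{n-\ell}}{\ell} H_{k,n-\ell}(T^\ell x)$: the resolution is that the exponential decay depends only on $\ell$, while the convergence in $n$ only needs to be controlled for fixed $\ell \leq L$, where it reduces to continuity of $\partial_s \varphi^{(\ell)}$ and uniform convergence of the family $(H_{k,m})_m$ to $H_k$.
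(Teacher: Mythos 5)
Your proposal is correct and follows essentially the same route as the paper: strong induction on $k$, the iterated form of Lemma~\ref{lem2.3} to write $\partial_x^k\varphi^{(n)}(\cdot,0)$ as a sum over $\ell$, the Lyapunov-exponent bound giving exponential decay in $\ell$ uniformly in $n$ (via $\varphi^{(n-\ell)}(T^\ell x,0)\leq q(T^\ell x)$ and monotonicity of $\partial_s\varphi^{(\ell)}$), the induction hypothesis and uniform continuity on $\RZ\times[0,K]$ to get $H_{k,m}\to H_k$ uniformly, and then a domination argument to pass to the limit; the paper simply packages your splitting step as the dominated convergence theorem in the Banach space $\mathcal{C}(\RZ,\mathbb{R})$ applied to the sequence $(f_{n,\ell})_{n,\ell}$.
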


\begin{proof}
    We show this result by strong induction on $k\in \N^*$.
    
    \underline{Basis case (k=1):}
    For all $x\in\RZ$ and $n\in\N^*$,\begin{align*}
        H_{1,n}(x)=\dxph{x}{\phn{Tx}{0}{n-1}}{},
    \end{align*}  and for  all $x\in\RZ$,
    \begin{align*}
        H_1(x)= \dxph{x}{q(Tx)}{}.
    \end{align*}
    $\partial_x\varphi$ is uniformly continuous on $\RZ\times[0,K]$ because $\varphi$ is $\mathcal{C}^1$ on $\RZ\times[0,K]$. Moreover, $x\mapsto\varphi^{(n)}(x,0)$ converges uniformly to $q$ by \cite[Corollary 4.1.2]{morand2024galtonwatsonprocessesdynamicalenvironments}. Thus, $H_{1,n}$ converges uniformly to $H_1$ when $n\to+\infty$.

 Let $\varepsilon>0$ be such that $(\lambda_u+\varepsilon)+\lambda_F+\varepsilon<0$. Let $N\in\N$ be such that for all $n\geq \ell\geq N$, \begin{align*}
       (T^\ell)'(x)\dsphn{ x}{\phn{T^{\ell}x}{0}{n-\ell}}{\ell} \leq (T^\ell)'(x)\dsphn{ x}{q(T^{\ell}x)}{\ell}\leq e^{\ell(\lambda_u+\varepsilon+\lambda_F+\varepsilon)}<1.
   \end{align*}
     As $(H_{1,n})_{n\in\N}$ converges uniformly to $H_1$ when $n\to+\infty$, $(H_{1,n})_{n\in\N}$ and $H_1$ are bounded by a common constant $C>0$.
     Define, for all $n,\ell\in\N$ and $x\in\RZ$,\begin{align*}
       f_{n,\ell}(x)\defeq\left\{\begin{array}{lll}
    (T^\ell)'(x)\dsphn{x}{\phn{T^{\ell}x}{0}{n-\ell}}{\ell}H_{1,n-\ell}(T^\ell x)&\text{if} &\ell<n\\
     0&\text{else} 
    \end{array}\right. ,
     \end{align*}
     and for all $\ell\in\N$ and $x\in\RZ$,
     \begin{align*}
         f_\ell(x)\defeq(T^\ell)'(x)\dsphn{ x}{q(T^{\ell}x)}{\ell}H_{1}(T^\ell x).
     \end{align*}
    So for all $\ell\in\N$, $f_{n,\ell}$ converges uniformly to $f_\ell$ when $n\to+\infty$.
    In addition, for all $\ell\in\N$ and $n\in\N$,
         \begin{align*}
             \ninf{f_{n,\ell}}\leq C e^{\ell(\lambda_u+\varepsilon+\lambda_F+\varepsilon)},
         \end{align*}
         which is summable (in $\ell$) and independent of $n$.
    Thus, by the dominated convergence theorem in the Banach space $\mathcal{C}(\RZ,\mathbb{R})$ \cite[Corollary~III.6.16]{MR117523},
     \begin{align*}
        \bigg( x \mapsto\dxphn{x}{0}{n}{}\bigg)=\sum_{\ell=0}^{+\infty}f_{n,\ell}\xrightarrow[n\to\infty]{\ninf{.}}\sum_{\ell=0}^{+\infty}f_{\ell}.
     \end{align*}
    Thus, $q$ is $\mathcal{C}^{1}$, for all $x\in\RZ$, \begin{align*}
        q'(x)=\sum_{\ell=0}^{+\infty}(T^\ell)'(x)\dsphn{ x}{q(T^{\ell}x)}{\ell}H_{1}(T^\ell x),
    \end{align*} and $x\mapsto\dxphn{x}{0}{n}{}$ converges uniformly to $q'$ when $n\to+\infty$.
    
    \underline{Inductive step:} Let $k\geq 2$. We suppose that the result is true for all $1\leq k'\leq k-1$, and we prove the result at rank $k$. For all $x\in\RZ$ and $n\in\N^*$,\begin{align*}
        H_{k,n}(x)=P_k[ (T^{(z)}(x))_{ z\leq k}, (\dxsph{x}{\phn{Tx}{0}{n-1}}{z_1}{z_2})_{z_1+z_2\leq k}, (\dxphn{Tx}{0}{n-1}{z})_{z<k}],
    \end{align*}  and for  all $x\in\RZ$,
    \begin{align*}
        H_k(x)= P_k[ (T^{(z)}(x))_{ z\leq k}, (\dxsph{x}{q(Tx)}{z_1}{z_2})_{z_1+z_2\leq k}, (q^{(z)}(Tx))_{z<k}].
    \end{align*}
    For all $z_1+z_2\leq k$, $\partial_x^{z_1}\partial_s^{z_2}\varphi$ is uniformly continuous on $\RZ\times[0,K]$ because $\varphi$ is $\mathcal{C}^k$ on $\RZ\times[0,K]$. Moreover, $x\mapsto\varphi^{(n)}(x,0)$ converges uniformly to $q$ by \cite[Corollary 4.1.2]{morand2024galtonwatsonprocessesdynamicalenvironments}. 
    So $x\mapsto \dxsph{x}{\phn{Tx}{0}{n-1}}{z_1}{z_2}$ converges uniformly to $x\mapsto \dxsph{x}{q(Tx)}{z_1}{z_2}$ when $n\to+\infty$ and for all $\ell\in\N$, $x\mapsto\dsphn{x}{\phn{T^{\ell}x}{0}{n-\ell}}{\ell}$ converges uniformly to $x\mapsto\dsphn{ x}{q(T^{\ell}x)}{\ell}$ when $n\to+\infty$. By the induction hypothesis, for all $1\leq z<k$, $x\mapsto \dxphn{Tx}{0}{n-1}{z}$ converges uniformly to $q^{(z)}\circ T$. Thus, $H_{k,n}$ converges uniformly to $H_k$ when $n\to+\infty$.

 Let $\varepsilon>0$ be such that $k(\lambda_u+\varepsilon)+\lambda_F+\varepsilon<0$. Let $N\in\N$ be such that for all $n\geq \ell\geq N$, \begin{align*}
       (T^\ell)'(x)^k\dsphn{ x}{\phn{T^{\ell}x}{0}{n-\ell}}{\ell} \leq (T^\ell)'(x)^k\dsphn{ x}{q(T^{\ell}x)}{\ell}\leq e^{\ell(k(\lambda_u+\varepsilon)+\lambda_F+\varepsilon)}<1.
   \end{align*}
     As $(H_{k,n})_{n\in\N}$ converges uniformly to $H_k$ when $n\to+\infty$, $(H_{k,n})_{n\in\N}$ and $H_k$ are bounded by a common constant $C>0$.
     Let us define, for all $n,\ell\in\N$ and $x\in\RZ$,\begin{align*}
       f_{n,\ell}(x)\defeq\left\{\begin{array}{lll}
    (T^\ell)'(x)^k\dsphn{x}{\phn{T^{\ell}x}{0}{n-\ell}}{\ell}H_{k,n-\ell}(T^\ell x)&\text{if} &\ell<n\\
     0&\text{else} 
    \end{array}\right. ,
     \end{align*}
     and for all $\ell\in\N$ and $x\in\RZ$,
     \begin{align*}
         f_\ell(x)\defeq(T^\ell)'(x)^k\dsphn{ x}{q(T^{\ell}x)}{\ell}H_{k}(T^\ell x).
     \end{align*}
    So for all $\ell\in\N$, $f_{n,\ell}$ converges uniformly to $f_\ell$ as $n\to+\infty$.
    In addition, for all $\ell\in\N$ and $n\in\N$,
         \begin{align*}
             \ninf{f_{n,\ell}}\leq C e^{\ell(k(\lambda_u+\varepsilon)+\lambda_F+\varepsilon)},
         \end{align*}
         which is summable (in $\ell$) and independent of $n$.
    Thus, by the dominated convergence theorem in the Banach space $\mathcal{C}(\RZ,\mathbb{R})$ \cite[Corollary~III.6.16]{MR117523},
     \begin{align*}
        \bigg( x \mapsto\dxphn{x}{0}{n}{k}\bigg)=\sum_{\ell=0}^{+\infty}f_{n,\ell}\xrightarrow[n\to\infty]{\ninf{.}}\sum_{\ell=0}^{+\infty}f_{\ell}.
     \end{align*}
    Therefore, $q$ is $\mathcal{C}^{k}$, for all $x\in\RZ$, \begin{align*}
        q^{(k)}(x)=\sum_{\ell=0}^{+\infty}(T^\ell)'(x)^k\dsphn{ x}{q(T^{\ell}x)}{\ell}H_{k}(T^\ell x),
    \end{align*} and $x\mapsto\dxphn{x}{0}{n}{k}$ converges uniformly to $q^{(k)}$ when $n\to+\infty$.
\end{proof}

\subsection{Hölder regularity of the invariant function}

After finding the differentiability class of $q$, we go further and ask what is the Hölder regularity of the highest derivative of $q$, which is what we do in Theorem~\ref{thm2.2}.

\begin{lemma}\label{lem2.4}
    Let $\alpha\in(0,1]$, $f:\RZ\to\mathbb{R}$ and $g:\RZ\to\RZ$ two $\mathcal{C}^1$ functions. Assume that there exist $a\in \R$, $b>0$ and $N\in\N$ such that for all $\ell\geq N$: \begin{align}\label{in10}
            \bninf{\prod_{j=0}^{\ell-1}f\circ g^j}\leq e^{a\ell}
        \quad \text{and} \quad
            \ninf{(g^\ell)'}\leq e^{b\ell}.
        \end{align}
    Then, there exists $C>0$, such that for all $\ell\in \N$, \begin{align*}
        \Big|\prod_{j=0}^{\ell-1}f\circ g^j\Big|_\alpha\leq Ce^{(a+\alpha b)\ell}.
    \end{align*}
\end{lemma}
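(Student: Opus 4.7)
The plan is interpolation. Set $P_\ell := \prod_{j=0}^{\ell-1} f\circ g^j$ and recall the standard Landau--Kolmogorov inequality on $\RZ$,
\begin{align*}
|h|_\alpha\leq 2\,\ninf{h}^{1-\alpha}\,\ninf{h'}^\alpha \quad\text{for } h\in\mathcal{C}^1(\RZ,\R),
\end{align*}
obtained by writing $|h(x)-h(y)|\leq \min(2\ninf{h},\,\ninf{h'}|x-y|)$ and optimizing in $|x-y|$. Applied to $P_\ell$, this reduces the lemma to the two bounds
\begin{align*}
\ninf{P_\ell}\leq C_1\,e^{a\ell}\quad\text{and}\quad \ninf{P_\ell'}\leq C_2\,e^{(a+b)\ell},
\end{align*}
since $(1-\alpha)a+\alpha(a+b)=a+\alpha b$.

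The first bound is hypothesis~\eqref{in10} for $\ell\geq N$, and extends to all $\ell$ using continuity of $f$ on the compact $\RZ$ to absorb the finitely many cases $\ell<N$ into the constant. The main step is the derivative bound. By the product rule,
\begin{align*}
P_\ell'(x)=\sum_{j=0}^{\ell-1} f'(g^j(x))\,(g^j)'(x)\prod_{i=0}^{j-1} f(g^i(x))\prod_{i=j+1}^{\ell-1} f(g^i(x)).
\end{align*}
The left partial product is bounded uniformly in $x$ by $C_1 e^{aj}$ via hypothesis~\eqref{in10} at length $j$. For the right partial product, substituting $y:=g^{j+1}(x)$ rewrites it as $\prod_{k=0}^{\ell-j-2} f(g^k(y))$, and since \eqref{in10} is a supremum over all starting points it gives the uniform bound $C_1 e^{a(\ell-j-1)}$. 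Combined with $\ninf{f'}<+\infty$ and $\ninf{(g^j)'}\leq C_3 e^{bj}$ (from~\eqref{in10} for $j\geq N$ and from the $\mathcal{C}^1$-regularity of $g$ for $j<N$), one gets
\begin{align*}
\ninf{P_\ell'}\leq C_4\sum_{j=0}^{\ell-1} e^{bj}e^{aj}e^{a(\ell-j-1)}=C_4\,e^{-a}\,e^{a\ell}\sum_{j=0}^{\ell-1} e^{bj}\leq C_5\,e^{(a+b)\ell},
\end{align*}
the final step using $b>0$ so that the geometric sum is dominated by its largest term.

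The only real subtlety is the bookkeeping that transfers hypothesis~\eqref{in10}, a bound on \emph{full-length} products, to the partial products appearing in $P_\ell'$; this works precisely because \eqref{in10} is phrased as a supremum over all starting points, so the same exponential bound applies whether the iteration begins at $x$ or at any $g^{j+1}(x)$. Plugging the two bounds into the Landau--Kolmogorov inequality yields $|P_\ell|_\alpha\leq C\,e^{(a+\alpha b)\ell}$, as claimed.
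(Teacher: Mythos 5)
Your proof is correct, but it follows a genuinely different route from the paper's. The paper controls the H\"older seminorm of $P_\ell=\prod_{j=0}^{\ell-1}f\circ g^j$ directly: it telescopes the difference $P_\ell(x)-P_\ell(y)$ into a sum of $\ell$ terms, bounds the left and right partial products by the sup-norm hypothesis (extended to all lengths by constants $C_1,C_2$), bounds the middle factor $|f(g^jx)-f(g^jy)|$ via $|f|_\alpha$ and the Lipschitz constant $\ninf{(g^j)'}\leq C_2 e^{bj}$, and sums a geometric series. You instead invoke the Landau--Kolmogorov interpolation $|h|_\alpha\leq 2\ninf{h}^{1-\alpha}\ninf{h'}^\alpha$, reducing the problem to the sup-norm bound (the hypothesis, extended) and a derivative bound $\ninf{P_\ell'}\leq C e^{(a+b)\ell}$, which you obtain from the product rule plus the observation, correctly flagged, that the supremum over starting points in \eqref{in10} lets you control right-hand partial products $\prod_{i>j}f(g^i(x))$ after the substitution $y=g^{j+1}(x)$. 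The exponents match: $(1-\alpha)a+\alpha(a+b)=a+\alpha b$, and the extension of each hypothesis from $\ell\geq N$ to all $\ell$ via compactness and the $\mathcal{C}^1$ assumption is handled. What the paper's approach buys is that it only uses $f\in\mathcal{C}^{0,\alpha}$, not $f\in\mathcal{C}^1$, so it would survive a weaker hypothesis; what your approach buys is a cleaner structural separation between the interpolation step and the product-norm estimates, and it avoids writing down the explicit two-point H\"older quotient. Both are valid proofs of the stated lemma.
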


\begin{proof}
    As for all $\ell\geq N$, \begin{align*}
            \bninf{\prod_{j=0}^{\ell-1}f\circ g^j}\leq e^{a\ell}.
            \end{align*}
        By Inequalities~\eqref{in10}, there exists $C_1,C_2>1$ such that for all $\ell\in \N$,\begin{align}\label{in9}
            \bninf{\prod_{j=0}^{\ell-1}f\circ g^j}\leq C_1 e^{a\ell} \quad \text{and} \quad
           \ninf{(g^\ell)'}=\bninf{\prod_{j=0}^{\ell-1}g'\circ g^j}\leq C_2e^{b\ell}.
        \end{align}
    Let $x,y\in\RZ$ and $\ell\in \N$. By Inequalities~\eqref{in9},
    \begin{align*}
        \Big|\prod_{j=0}^{\ell-1}f(g^jx)-\prod_{j=0}^{\ell-1}f(g^jy)\Big|&\leq \sum_{j=0}^{\ell-1}\bigg(\Big|\prod_{m=0}^{j-1}f(g^mx)\Big|\times\Big|f(g^jx)-f(g^jy)\Big|\times\Big|\prod_{m=j+1}^{\ell-1}f(g^my)\Big|\bigg)\\
        &\leq \sum_{j=0}^{\ell-1}C_1 e^{aj}|f(g^jx)-f(g^jy)|C_1  e^{a(\ell-j-1)}\\
        &\leq \sum_{j=0}^{\ell-1} \frac{C_1^2}{e^a}e^{a\ell}|f|_\alpha d(g^jx,g^jy)^\alpha\\
        &\leq \sum_{j=0}^{\ell-1} \frac{C_1^2}{e^a}e^{a\ell}|f|_\alpha  C_2 e^{\alpha bj} d(x,y)^\alpha\\
        &\leq Ce^{(a+\alpha b)\ell} \text{ with } C=\frac{C_1^2}{e^a}\frac{C_2}{1-e^{\alpha b}}|f|_\alpha \text{ because } b>0.\qedhere
    \end{align*}
\end{proof}

Proposition~\ref{prop_dq} and Lemma~\ref{lem2.4} allow us to prove Theorem~\ref{thm2.2}.

\begin{proof}[Proof of Theorem~\ref{thm2.2}]
    We show by strong induction on $k\in \N^*$, the hypothesis $\mathcal{P}(k)$: \textquotedblleft Assume $(H\ref{hyp4}(k))$. For all $\alpha\in(0,1]$ such that $k+\alpha <\frac{|\lambda_F|}{\lambda_u}$, the function $q^{(k)}$ is $\alpha$-Hölder continuous\textquotedblright. 
    
    For $k=0$, the result is true by \cite[Theorem 1.3.8]{morand2024galtonwatsonprocessesdynamicalenvironments}.

    Let $k\geq1$. We suppose that the result holds for all $k'\leq k-1$. Let $\alpha\in(0,1]$. Assume $(H\ref{hyp4}(k))$, and that $k+\alpha <\frac{|\lambda_F|}{\lambda_u}$. By Proposition~\ref{prop_dq}, for all $x\in\RZ$, 
    \begin{align*}
        q^{(k)}(x)=\sum_{\ell=0}^{+\infty}f_{\ell}(x),
    \end{align*}
    with\begin{align*}
        f_{\ell}(x)=(T^\ell)'(x)^k\dsphn{ x}{q(T^{\ell+1}x)}{\ell}H_{k}(T^\ell x).
    \end{align*}
Moreover, for all $x\in\RZ$, \begin{align*}
    H_k(x)= P_k[ (T^{(z)}(x))_{ z\leq k}, (\dxsph{x}{q(Tx)}{z_1}{z_2})_{z_1+z_2\leq k}, (q^{(z)}(Tx))_{z<k}]. 
\end{align*}
    As $T$ is $\mathcal{C}^{k+1}$, $\varphi$ is $\mathcal{C}^{k+1}$ on $\RZ\times[0,K]$, and by the induction hypothesis, \begin{itemize}
        \item $T^{(z)}$ is Lipschitz for $z\leq k$,
        \item $x\mapsto \dxsph{x}{q(Tx)}{z_1}{z_2}$ is Lipschitz for $z_1+z_2\leq k$,
        \item $q^{(z)}\circ T$ is Lipschitz for $z< k$.
    \end{itemize}
    Thus, $H_k$ is Lipschitz, so $\alpha$-Hölder continuous. Therefore, for all $\ell\in\N$, $f_\ell$ is $\alpha$-Hölder continuous.
    
    To prove that $q^{(k)}$ is $\alpha$-Hölder continuous, we need to control the $\alpha$-Hölder semi-norm of $f_\ell$ for $\ell\in\N$. 
    Let $\varepsilon>0$ be such that $(k+\alpha)(\lambda_u+\varepsilon)+\lambda_F+\varepsilon<0$. Let $N\in\N$ be such that for all $\ell\geq N$,
    \begin{align}\label{in1}
       \ninf{(T^\ell)'} \leq e^{\ell(\lambda_u+\varepsilon)},
    \end{align}
    and
    \begin{align}\label{in2}
        \ninf{x\mapsto\dsphn{ x}{q(T^{\ell}x)}{\ell}}\leq e^{\ell(\lambda_F+\varepsilon)}.
    \end{align}

Apply Lemma~\ref{lem2.4} twice: \begin{itemize}
    \item A first time with $f=T'$, $g=T$, $a=b=\lambda_u+\varepsilon>0$. Then by Inequality~\eqref{in1}, there exists $C_1>0$ such that for all $\ell\in \N$, \begin{align}\label{in3}
        |(T^\ell)'|_\alpha=\Big|\prod_{j=0}^{\ell-1} T' \circ T^j\Big|_\alpha\leq C_1 e^{(1+\alpha)(\lambda_u+\varepsilon)\ell}.
    \end{align} 
    \item A second time with $f:x\mapsto\partial_s\varphi(x,q(T x)$, $g=T$, $a=\lambda_F+\varepsilon$ and $b=\lambda_u+\varepsilon>0$. Then by Inequality~\eqref{in2}, there exists $C_2>0$ such that for all $\ell\in \N$, \begin{align}\label{in4}
       |x\mapsto \partial_s^{\ell}\varphi( x,q(T^{\ell} x))|_\alpha= \Big|x\mapsto\prod_{j=0}^{\ell-1} \partial_s\varphi(T^j x,q(T^{j+1} x)) \Big|_\alpha\leq C_2 e^{((\lambda_F+\varepsilon)+\alpha(\lambda_u+\varepsilon))\ell}.
    \end{align}
\end{itemize}
    Furthermore, by Inequality~\eqref{in1}, for all $\ell\geq N$,
    \begin{align}\label{in5}
        |H_k \circ T^\ell|_\alpha\leq |H_k|_\alpha \ninf{(T^\ell)'}^\alpha\leq |H_k|_\alpha e^{\ell\alpha(\lambda_u+\varepsilon)}.
    \end{align}

    Here is a table to summarise Inequalities~\eqref{in1},~\eqref{in2},~\eqref{in3},~\eqref{in4}, and~\eqref{in5}: for all $\ell\geq N$,
    \begin{center}
    \begin{tabular}{|c|c|c|}
    \hline
    bound on\ldots & the uniform norm & the $\alpha$-Hölder semi-norm  \\[0.5ex]
    \hline
    $(T^\ell)'$& $e^{\ell(\lambda_u+\varepsilon)}$ & $ C_1 e^{(1+\alpha)(\lambda_u+\varepsilon)\ell}$\\[0.5ex]
    \hline
    $x\mapsto \partial_s^{\ell}\varphi( x,q(T^{\ell} x))$& $e^{\ell(\lambda_F+\varepsilon)}$& $ C_2 e^{((\lambda_F+\varepsilon)+\alpha(\lambda_u+\varepsilon))\ell}$\\[0.5ex]
    \hline 
    $H_k \circ T^\ell$ & $ \ninf{H_k}$ & $e^{\ell\alpha(\lambda_u+\varepsilon)}|H_k|_\alpha $\\[0.5ex]
    \hline
 \end{tabular}.
\end{center}
 
    Thus, as a product of $\alpha$-Hölder functions, there exists $C>0$ such that for all $\ell\geq N$, \begin{align*}
        |f_\ell|_\alpha=|x\mapsto(T^\ell)'(x)^k\dsphn{ x}{q(T^{\ell}x)}{\ell}H_{k}(T^\ell x)|_\alpha\leq C e^{\ell((k+\alpha)(\lambda_u+\varepsilon)+(\lambda_F+\varepsilon))},
    \end{align*}
    which is summable because $(k+\alpha)(\lambda_u+\varepsilon)+\lambda_F+\varepsilon<0$.
    Thus $q^{(k)}=\uo{\ell=0}{+\infty}{\sum}f_\ell$ is $\alpha$-Hölder continuous. 
\end{proof}

 \section*{Acknowledgments}
The author would like to thank his thesis supervisor, Damien Thomine, for his advice and proofreading; Mateo Ghezal and Caroline Wormell, for their interesting discussions; Ewen Lallinec and Sylvain Faure, for their assistance with numerical computations.
\bibliographystyle{alpha}
\bibliography{biblio.bib}

\end{document}